\documentclass[oneside,a4paper,12pt]{article}
\usepackage[utf8]{inputenc} 

\usepackage{lmodern}
\usepackage[T1]{fontenc}
\usepackage[svgnames,x11names,dvipsnames]{xcolor} 
\usepackage{amsfonts,amsmath,latexsym,amssymb,amsthm,epigraph, bbm, mathtools, upgreek}  
\usepackage{authblk}
\usepackage{xparse, stmaryrd}
\usepackage{tikz-cd, tikz} 
\usepackage{enumerate} 
\usepackage[]{hyperref} 
\usepackage{cleveref} 
\usepackage{indentfirst} 
\usepackage[text={16cm,23cm}]{geometry} 
\usepackage{breqn}
\usepackage{qtree}
\usepackage{listings}

\newcommand{\N}{{\mathbb{N}}} 
\newcommand{\Z}{{\mathbb{Z}}} 
\newcommand{\F}{{\mathbb{F}}}

\NewDocumentCommand{\var}{g}{
	\ensuremath{\operatorname{Var}_{\IfNoValueTF{#1}{k}{#1}}}%
}

\theoremstyle{plain} 
\newtheorem{theorem}{Theorem}[section]

\newtheorem{lemma}[theorem]{Lemma}
\newtheorem{corollary}[theorem]{Corollary}

\newtheorem*{main*}{Main Theorem}

\theoremstyle{definition} 
\newtheorem{definition}[theorem]{Definition}
\newtheorem{remark}[theorem]{Remark}

\newcommand{\Aut}{\mathrm{Aut}}

\title{On the existence of a morphism between certain 
Artin-Schreier curves}
\author{Beatriz Barbero Lucas\thanks{School of Mathematics and Statistics, University College Dublin, Ireland}\quad Stefano Lia \thanks{Department of Mathematics and Mathematical Statistics, Ume\aa~University, Sweden}\quad Gary McGuire\thanks{School of Mathematics and Statistics, University College Dublin, Ireland}}
\date{}
\hypersetup{ 
	pdftitle={On the existence of a morphism between certain 
Artin-Schreier curves},
	pdfauthor={Beatriz Barbero Lucas, Stefano Lia, Gary McGuire},
	bookmarksnumbered=true,     
	bookmarksopen=true,         
	bookmarksopenlevel=1,       
	colorlinks=true,
	linkcolor=blue,
	citecolor=Green,
	urlcolor=blue,
	pdfstartview=Fit,           
	pdfpagemode=UseOutlines,    
}

\begin{document}

\baselineskip18pt
\parskip15pt
\parindent0pt

\maketitle

\begin{abstract}
    It is well known that, given  two  
    curves  $\mathcal{X}: y^p+cy=x^m$ and $\mathcal{Y}:y^p+cy=x^n$, 
    defined over $\F_p$, if $n$ divides $m$ then there exists a nonconstant morphism 
    $\mathcal{X} \longrightarrow \mathcal{Y}$. In this paper we are interested in studying whether the converse of this statement is true, i.e., if there exists a morphism $\mathcal{X} \longrightarrow \mathcal{Y}$ then must it be true that $n$ divides $m$? In particular, we consider the case when $m=p^{k}+1$ and $n=p^\ell+1$. We prove that the converse is true 
under certain hypotheses. We deal with both the cases of Galois morphisms
and non-Galois morphisms.

\end{abstract}

\section{Introduction}

Let $K$ be the algebraic closure of the field $\mathbb{F}_q$ of $q$ elements, where $q$ is a power of a prime $p$.
In this paper we will assume that $p$ is odd.
Let $\mathcal{X}, \mathcal{Y}$ be non-singular, projective, geometrically irreducible algebraic curves of genus $\mathfrak{g}(\mathcal{X}),\mathfrak{g}(\mathcal{Y})\geq 1$ and defined over $K$.
The existence  of non-constant  morphisms 
$\phi~\colon \mathcal{X}\longrightarrow \mathcal{Y}$ is a notoriously hard problem, and especially in the case of non existence, few results are known.
We  will assume that morphisms are defined over $K$, and so any 
non-constant morphism will be surjective.
We will assume that all morphisms are non-constant.

As an instance, consider the family $\mathcal{C}$ 
of plane curves $C_m$ given by separate polynomials
with affine equation of the form
\[
C_m~\colon~y^p+c y=x^m
\]
where $c \in K$ is nonzero, $m>p$, and $\gcd(m,p)=1$. 
When $c=-1$, these are Artin-Schreier curves.
These curves have one singular point, so when
referring to $C_m$ what we  mean is the nonsingular model of the curve,
which has the same function field $\mathrm{F}_m=K(x,y)$, where $y^p+c y=x^m$.
It is straightforward to see that, if $n\mid m$, then there exist a morphism 
$C_m\longrightarrow C_n$, corresponding to the function field 
extension $K(x,y)~:K(x^{m/n},y)$.

It is natural to ask the converse question, namely, whether it is true that, if $n\nmid m$, then no morphism $C_m\longrightarrow C_n$ exists.
Even though the question is very natural, and the family $\mathcal{C}$ has been widely studied \cite{WeierstrassPointsGarciaViana,Bouw2016,van1990reed,HKT}, to the best of our knowledge, there is no result in this direction.

In this paper, we initiate the investigation of this problem, and we address the question for a subfamily of the family $\mathcal{C}$. In particular, we consider the family $\mathcal{B}$ of curves $B_k:=B_{k,c}$, for $k\in \mathbb{N}_{\geq 1}$, given by the normalization of the curve with affine equation
\[
y^p+c y=x^{p^k+1}.
\]
The function field of $B_k$ is $K(x,y)$, which has genus $p^k(p-1)/2$.
We will pay particular attention to the cases $c=+1$ and $c=-1$ (when the curves
are Artin-Schreier curves).

We conjecture that $p^\ell +1$ dividing $p^k+1$ is both a necessary and sufficient condition 
for a morphism $ B_{k} \longrightarrow B_{\ell}$ to exist.
In this article, we present some results to support this conjecture. 
Our main result in this direction is the following, for a morphism
that is not necessarily Galois.

\begin{theorem}\label{intro1}
Let $p$ be an odd prime. Let $\ell, k$ be positive integers with $1<\ell <k$. 
Let $K=\overline{\F_p}$. Let $c \in K$ be nonzero.
Let $B_k$ be the curve $y^p+c y=x^{p^k+1}$ and let 
$B_\ell$ be the curve $y^p+c y=x^{p^\ell +1}$. 
Let $P_k$ and $P_\ell$ be the points at infinity of $B_k$ and $B_\ell$,
respectively.

(a) If $p^\ell +1$  divides $p^k+1$, then
     there exists a non-trivial morphism  
    $\rho : B_{k} \longrightarrow B_{\ell}$, with the properties that
    $\rho(P_k)=P_\ell$, $\rho$ is totally ramified at $P_k$,
    and $\deg (\rho)=\frac{p^k+1}{p^\ell +1}$.

(b) Conversely, if there exists a non-trivial morphism  
    $\phi : B_{k} \longrightarrow B_{\ell}$, such that $\phi (P_k)=P_\ell$ 
    and $\phi$ is totally ramified at $P_k$, 
    and $\deg (\phi)$ is relatively prime to $p$,
    then $p^\ell +1$  divides $p^k+1$ and
    $\deg (\phi)=\frac{p^k+1}{p^\ell +1}$.
\end{theorem}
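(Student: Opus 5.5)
For part (a) we use the obvious map. Set $d=\frac{p^k+1}{p^\ell+1}$ and define $\rho$ on function fields by $x\mapsto x^{d}$ and $y\mapsto y$; that is, we take the subfield $K(x^{d},y)\subseteq K(x,y)$. Since $(x^d)^{p^\ell+1}=x^{p^k+1}=y^p+cy$, this subfield is the function field of $B_\ell$. Its degree is $[K(x,y):K(x^d,y)]=d$, because $K(x,y)=K(x^d,y)(x)$ and $x$ is a root of $T^d-x^d$. Moreover $[K(x,y):K(x)]=p$ and $[K(x^d,y):K(x^d)]=p$, so the degree is exactly $d$.

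For total ramification at $P_k$, recall that $P_k$ is the unique pole of $x$, with $v_{P_k}(x)=-p$ and $v_{P_k}(y)=-(p^k+1)$. Hence $x^d$ has a unique pole, at $P_k$, of order $pd$. On $B_\ell$, the function $x^d$ plays the role of $x$ and has a unique pole $P_\ell$ of order $p$. So the only point over $P_\ell$ is $P_k$, and its ramification index is $e=pd/p=d$, which is the full degree. This proves (a).

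For part (b) we compare Weierstrass semigroups, or equivalently pole orders. Let $\phi$ have degree $d$ with $p\nmid d$, $\phi(P_k)=P_\ell$, and $e_{P_k}=d$. The pullback $\phi^*$ multiplies valuations at $P_\ell$ by $d$. So for every function $f$ on $B_\ell$ regular away from $P_\ell$, the function $\phi^*f$ is regular away from $P_k$, since no other point maps to $P_\ell$, and $-v_{P_k}(\phi^*f)=d\cdot(-v_{P_\ell}(f))$. This gives $d\,H(P_\ell)\subseteq H(P_k)$, where $H(P_k)=\langle p,\,p^k+1\rangle$ and $H(P_\ell)=\langle p,\,p^\ell+1\rangle$; this is the standard description of the semigroup at infinity of $y^p+cy=x^m$. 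Thus $dp\in\langle p,p^k+1\rangle$ and $d(p^\ell+1)\in\langle p,p^k+1\rangle$.

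Next we use Riemann--Hurwitz. The map is tamely ramified at $P_k$ because $p\nmid d$, so
\[
2g_k-2\ \ge\ d(2g_\ell-2)+(d-1),
\]
that is, $p^k(p-1)-2\ge d\,(p^\ell(p-1)-1)-1$. This gives roughly $d\le p^{k-\ell}$, and more precisely
\[
d\le \frac{p^k(p-1)-1}{p^\ell(p-1)-1}.
\]
Now write $d(p^\ell+1)=ap+b(p^k+1)$ with $a,b\ge0$. The bound on $d$ gives $d(p^\ell+1)<2(p^k+1)$, so $b\in\{0,1\}$. If $b=0$, then $p\mid d(p^\ell+1)$, which is impossible since $p\nmid d$ and $p\nmid p^\ell+1$. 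Hence $b=1$ and $d(p^\ell+1)=ap+p^k+1$.

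To finish we must show $a=0$, which gives $d(p^\ell+1)=p^k+1$. This is the step I expect to be the main obstacle. The semigroup inclusion alone does not seem to force it, so we need more structure. My first attempt is to use the pullbacks $X=\phi^*x$ and $Y=\phi^*y$, which have pole orders $dp$ and $d(p^\ell+1)$ at $P_k$ and satisfy $Y^p+cY=X^{p^\ell+1}$. Elements of the Riemann--Roch spaces $L(nP_k)$ are polynomials $\sum a_{ij}x^iy^j$ with $j<p$. The function $X$, of pole order $dp$, therefore has the form $X=F(x)+\sum_{j\ge1}(\cdot)y^j$ with $jp^k+\dots\le dp$. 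Since $dp<p^{k+1}$ and its pole order is divisible by $p$, this forces $X$ to be a polynomial in $x$ of degree $d$, up to low-order terms involving $y$ that must vanish. Then $Y^p+cY=F(x)^{p^\ell+1}$. Writing $Y=G(x)+y\,H(x)+\dots$ and comparing leading pole orders should give $d(p^\ell+1)\equiv 0$ or $\equiv p^k+1\pmod p$ consistently, and the Artin--Schreier reduction of $F(x)^{p^\ell+1}$ modulo $\{g^p+cg\}$ should have degree exactly $p^k+1$. Forcing the degree of $F^{p^\ell+1}$ after reduction to equal $p^k+1$ with $\deg F=d$ then gives $d(p^\ell+1)=p^k+1$. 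Alternatively, the condition $a>0$ might be ruled out directly by the genus bound combined with $p\nmid d$.
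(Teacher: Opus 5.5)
Part (a), and your reduction of (b) to $d(p^\ell+1)=ap+p^k+1$ with $b=1$, are correct and follow the paper's route: pullbacks of $x_\ell,y_\ell$, pole orders at $P_k$, the Riemann--Hurwitz bound to get $b\le 1$, and $p\nmid d$ to exclude $b=0$. The genuine gap is the decisive step $a=0$, which you only sketch with ``should''.

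Your fallback, that the genus bound together with $p\nmid d$ might rule out $a>0$, cannot work. Take $p=3$, $\ell=2$, $k=5$, $d=25$, $a=2$. Then $d(p^\ell+1)=250=ap+p^k+1$, $p\nmid d$, $d\le 485/17$ (your Riemann--Hurwitz bound), and even $dp<p^k+1$. Yet $10\nmid 244$. So the equation $Y^p+cY=X^{p^\ell+1}$ must be used.

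Your Artin--Schreier sketch is also incomplete as written. The claim that the reduction of $F^{p^\ell+1}$ ``should have degree exactly $p^k+1$'' presupposes three things you do not state:
\begin{itemize}
\item that $Y\notin K(x)$, so $K(x,Y)=K(x,y)$;
\item the classification $Y=jy+g$ with $j\in\F_p^*$, $g\in K(x)$;
\item that reduction does not lower the degree of $F^{p^\ell+1}$, which holds only because $p\nmid d(p^\ell+1)$.
\end{itemize}

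The missing argument is short and uses only what you already have.
\begin{itemize}
\item Every basis term of $Y$ has pole order at most $d(p^\ell+1)<2(p^k+1)$, so no $y^j$ with $j\ge 2$ occurs. Hence $Y=yH(x)+G(x)$ exactly, with no ``$\dots$''.
\item Since $b=1$ and basis elements have distinct pole orders, the unique term of maximal pole order is $x^a y$, so $\deg H=a$.
\item Substitute into $Y^p+cY=F(x)^{p^\ell+1}$ and replace $y^p$ by $-cy+x^{p^k+1}$. This gives $-cy\,(H^p-H)=F^{p^\ell+1}-x^{p^k+1}H^p-G^p-cG$.
\item Since $c\neq 0$ and $y\notin K(x)$, both sides vanish. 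So $H^p=H$, which forces $H$ to be a nonzero constant in $\F_p$. Hence $a=0$ and $d(p^\ell+1)=p^k+1$.
\end{itemize}

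You should also fix the justification that $X=F(x)$. The bound $dp<p^{k+1}$ does not exclude terms $x^iy^j$ with $j\ge1$. What you need is $dp<p^k+1$, i.e.\ $d<(p^k+1)/p$. Your Riemann--Hurwitz bound gives this precisely because $\ell\ge 2$.
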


A number of results give conditions on the existence of a morphism $\phi$ between curves $\mathcal{X}$ and $\mathcal{Y}$. Firstly, a general upper bound on the degree $d$ of $\phi$ can be derived from the Riemann-Hurwitz formula \cite{hartshorne1977algebraic}[Cor. 2.4], namely $d\leq \frac{\mathfrak{g}(\mathcal{X})-1}{\mathfrak{g}(\mathcal{Y})-1}$,
by using the fact that the different is  nonnegative.
Secondly,  by a theorem of Kleiman-Serre \cite{kleiman1968algebraic}[Prop. 1.2.4], if a morphism 
$\phi~\colon \mathcal{X}\longrightarrow \mathcal{Y}$ exists, then the L-polynomial $L_{\mathcal{Y}}(t)$ of $\mathcal{Y}$ must divide the L-polynomial $L_{\mathcal{X}}(t)$ of $\mathcal{X}$. The converse of this result is known to be false.
Interestingly, for the family $\mathcal{B}$ when $c=-1$, 
the Artin-Schreier case, it has been proved in \cite{MCGUIRE20193341}[Theorems 24 and 25] that the L-polynomial of $B_\ell$ divides the L-polynomial of $B_k$ if and only if $\ell$ divides $k$. 
It follows that if $k$ does not divide $\ell$ then we know that there is no morphism
$B_k \longrightarrow B_\ell$.
The conditions in Theorem \ref{intro1} point towards a different (and stronger) condition 
for the existence of a morphism $ B_{k} \longrightarrow B_{\ell}$, namely,
that $p^\ell +1$  divides $p^k+1$.
Therefore, our results provide an infinite family of explicit examples where the converse of the
Kleiman-Serre theorem fails, and also where it succeeds.

In the second part of the paper, we will restrict ourselves to the cases when $c=\pm 1$ and the morphism $\phi: B_k \longrightarrow B_\ell$ is Galois. 
We prove some results like the following (this statement is a collection of some results proved separately later).

\begin{theorem}
If $k=2^a$, there is no Galois morphism from a curve of type $B_k$ to one of type $B_{\ell}$. 

\end{theorem}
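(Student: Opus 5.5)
The plan is to reduce the statement to Theorem~\ref{intro1}(b) and then use the elementary fact that $p^\ell+1\mid p^k+1$ holds if and only if $k/\ell$ is an odd integer. If $k=2^a$ and $\ell<k$, then $k/\ell$ is never an odd integer: either $\ell\nmid k$, or $k/\ell$ is a positive power of $2$. So once the hypotheses of Theorem~\ref{intro1}(b) are verified for an arbitrary Galois morphism $\phi\colon B_k\to B_\ell$ with $c=\pm1$, we get the contradiction $p^\ell+1\mid p^k+1$. The case $\ell\geq k$ is excluded separately by the Riemann--Hurwitz bound $d\le(\mathfrak g(B_k)-1)/(\mathfrak g(B_\ell)-1)$ quoted in the introduction, except for $\ell=k$, which is not a morphism to a curve of different type. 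The case $\ell=1$ needs care and is treated at the end.

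A Galois morphism $\phi$ is the quotient map $B_k\to B_k/G$ for a finite subgroup $G\le\Aut(B_k)$, with $B_k/G\cong B_\ell$. I will use the explicit description of $\Aut(B_k)$ for $c=\pm1$ and $k\geq 2$. It fixes $P_k$ and is a semidirect product $Q\rtimes C$. Here $Q$ is the Sylow $p$-subgroup, of order $p^{2k+1}$, consisting of maps $x\mapsto x+b$, $y\mapsto y+B(x)+t$, where $b$ runs over the roots of the additive polynomial coming from $(x+b)^{p^k+1}-x^{p^k+1}$. Its centre is $\{y\mapsto y+t : t^p+ct=0\}$. The group $C$ is cyclic, acting by $x\mapsto \lambda x$, $y\mapsto \mu y$. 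Since every element of $G$ fixes $P_k$, the point $P_k$ is totally ramified in $\phi$. Moreover, the point $Q_0=\phi(P_k)$ is the unique point of $B_\ell$ over which $\phi$ is wildly or fully ramified. For $\ell\geq 2$, I would identify $Q_0$ with $P_\ell$ as follows. The only point of $B_\ell$ with gap sequence/Weierstrass semigroup $\langle p,p^\ell+1\rangle$ is $P_\ell$. Pulling back the pole orders at $Q_0$ multiplies them by $|G|$. Hence the semigroup at $Q_0$ embeds, scaled by $|G|$, into $\langle p,p^k+1\rangle$, and from this I would force $Q_0$ to be $P_\ell$. A cleaner alternative is to note that $\Aut(B_\ell)$ fixes only $P_\ell$ and that $N_{\Aut(B_k)}(G)/G$ descends to a subgroup of $\Aut(B_\ell)$ fixing $Q_0$.

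The remaining hypothesis of Theorem~\ref{intro1}(b) is $p\nmid |G|$, and I expect this to be the main obstacle. Suppose $H=G\cap Q\neq 1$. Then $\phi$ factors through $B_k/H$, and $B_k\to B_k/H\to B_\ell$ has only the point $P_k$ ramified, wildly. I would compute $\mathfrak g(B_k/H)$ with the Riemann--Hurwitz formula, using the higher ramification groups of $Q$ at $P_k$. These groups are explicit, since $v_{P_k}(x)=-p$, $v_{P_k}(y)=-(p^k+1)$, and $\sigma(x)-x=b$. The aim is to show that either $H$ meets the centre, so that $K(x)\subseteq$ fixed field and the quotient is rational, or $B_k/H$ is again a curve $y^p+cy=f(x)$ with an additive-type right side of smaller degree. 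The Deuring--Shafarevich formula gives $p$-rank $0$ for all such quotients, which is consistent, so it gives no contradiction by itself. The real constraint will come from the genus equation
$$p^k(p-1)-2=|G|\bigl(p^\ell(p-1)-2\bigr)+\deg \mathrm{Diff},$$
with the different concentrated at $P_k$. I would try to show that this equation has no solution with $p\mid |G|$ by comparing the different exponent at $P_k$, computed via Hilbert's formula from the filtration of $Q$, with the required value.

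If that fails, my fallback is to pass to the maximal prime-to-$p$ quotient. I would write $G=H\rtimes(G\cap C')$ and replace $B_k$ by the intermediate curve, verifying directly that it is not of type $B_{k'}$. Once $p\nmid|G|$ is established, Theorem~\ref{intro1}(b) applies and gives the contradiction. For $\ell=1$, the target is the Hermitian curve, whose automorphism group is much larger, so the identification of $Q_0$ with $P_1$ must be replaced by an argument using the $2$-transitivity of $\Aut(B_1)$ on its $\F_{p^2}$-rational points. Also, Theorem~\ref{intro1} assumes $1<\ell$, so for $\ell=1$ the divisibility $p+1\mid p^{2^a}+1$ has to be refuted directly. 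This follows because $p^{2^a}\equiv1\pmod{p+1}$, so $p^{2^a}+1\equiv 2\pmod{p+1}$, and $p+1>2$. The genus equation with $p\nmid|G|$, where the different at $P_k$ is $|G|-1$, should yield $|G|=(p^k+1)/(p+1)$, which is not an integer.
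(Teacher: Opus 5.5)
Your tame case matches the paper's. For $r=0$ the paper invokes Theorem~\ref{first_groups}(1), that is, Theorem~\ref{thm: main}(b), and Lemma~\ref{lem:divisibility cond} finishes the argument because $k/\ell$ is never odd when $k=2^a$.

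\textbf{The gap.} It is the step you flag yourself, proving $p\nmid|G|$, and the mechanism you propose cannot deliver it. Take $U\le G^{(1)}_{P_k}$ with $U\cap E=1$ and $|U|=p^{r}$.
\begin{itemize}
\item Every nontrivial element of $U$ fixes only $P_k$ and satisfies $v_{P_k}(\sigma(t)-t)=2$ for a uniformizer $t$ (Section~\ref{Section5}).
\item Hence the different of $B_k\to B_k/U$ has degree $2(|U|-1)$, concentrated at $P_k$, and Riemann--Hurwitz gives $\mathfrak g(B_k/U)=p^{k-r}(p-1)/2$.
\item For $r=k-\ell$ this is exactly $\mathfrak g(B_\ell)$. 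So your equation $p^k(p-1)-2=|G|\bigl(p^\ell(p-1)-2\bigr)+\deg\mathrm{Diff}$ is satisfied with $|G|=p^{k-\ell}$.
\end{itemize}
No comparison of different exponents at $P_k$ can therefore force $p\nmid|G|$. The curve $B_k/U$ is, as you anticipate, of the form $y^p+cy=f(x)$ with an additive-type right side of degree $p^\ell+1$. What separates it from $B_\ell$ is its isomorphism class, not its genus, and your proposal has no argument at that level.

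The fallback does not help either. If the intermediate curve $B_k/(G\cap Q)$ is not of type $B_{k'}$, then Theorem~\ref{intro1}(b) does not apply to the remaining cover, and nothing else you cite does.

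\textbf{How the paper handles the wild part.} The paper never tries to remove it. In Section~\ref{Section5} it computes $\mathfrak g(B_k/G)$ for $G=U\rtimes C$. There the different is not concentrated at $P_k$: $C$ fixes $(0,0)$ and has stabilizers of order $m/t$ at the other points $(0,y)$. The same oversight affects your $\ell=1$ count, where you take the different to be $|G|-1$ at $P_k$.

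This computation yields \eqref{eq: B_kB_l}, $m(p^\ell t+1)=t(p^{k-r}+1)$. From there the obstruction is arithmetic, and this is exactly where $k=2^a$ is used:
\begin{itemize}
\item for every $0<r<k$, $k/\gcd(k,r)$ is even, so $\gcd(p^{k-r}+1,p^k+1)=2$ by Lemma~\ref{gcd_p_plus};
\item this gives $u_0=1$ in Lemma~\ref{general_div};
\item Lemma~\ref{general_div2} then yields a $2$-adic contradiction.
\end{itemize}
Your argument for $p\nmid|G|$, by contrast, never uses $k=2^a$.

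\textbf{The pure $p$-group case.} Lemma~\ref{general_div2} assumes $m>1$. So the configuration above ($m=1$, $r=k-\ell$) is precisely the case the paper must exclude separately, via the normalizer count of Theorem~\ref{first_groups}(2). If you go that way, note that $G^{(1)}_{P_k}$ is extraspecial rather than abelian. The commutator of $\beta_{d,e}$ and $\beta_{d',e'}$ is the translation of $y$ by $\pm\bigl(L_{d'}(d)-L_d(d')\bigr)$, which is not identically zero. Consequently the $p$-part of $|N(U)/U|$ is $p^{2k+1-2r}$, and the count only gives $r\ge k-\ell$. This case genuinely needs an argument about the isomorphism class of $B_k/U$.
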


\begin{theorem}
There is no Galois morphism
$B_{2\ell}\longrightarrow B_\ell$ for any integer $\ell > 2$.
\end{theorem}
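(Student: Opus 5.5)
The plan is to realise the Galois morphism as a quotient and then split according to whether $p$ divides its degree. Suppose $\phi\colon B_{2\ell}\to B_\ell$ is Galois with group $G\le \Aut(B_{2\ell})$, so that $B_\ell\cong B_{2\ell}/G$ and $d=\deg\phi=|G|$. The first step is to pin down $\Aut(B_{2\ell})$. For $k=2\ell\ge 2$ the curve $y^p+cy=x^{p^k+1}$ is not Hermitian, and I would quote the known description of the automorphism group of curves with separated variables (Stichtenoth; see \cite{HKT}): every automorphism fixes the point at infinity $P_{2\ell}$, and $\Aut(B_{2\ell})=S\rtimes C$. Here $S$ is the Sylow $p$-subgroup, consisting of the maps $(x,y)\mapsto(x+b,\,y+\text{polynomial in }x)$ together with $(x,y)\mapsto(x,y+a)$ where $a^p+ca=0$. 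The factor $C$ is cyclic of order prime to $p$, acting by $(x,y)\mapsto(\lambda x,\lambda^{p^{k}+1}y)$ with a compatibility condition on $\lambda$. In particular $G$ fixes $P_{2\ell}$, so $\phi$ is totally ramified at $P_{2\ell}$.

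Next I check that $\phi(P_{2\ell})=P_\ell$. Let $Q=\phi(P_{2\ell})$. Since $\phi$ is totally ramified at $P_{2\ell}$, pulling back gives $\phi^*L(rQ)\subseteq L(rdP_{2\ell})$. Hence $d\cdot H(Q)\subseteq H(P_{2\ell})=\langle p,\,p^{2\ell}+1\rangle$. The plan is to compare gap counts to show that $Q$ is a Weierstrass point of $B_\ell$. Both curves have genus of the form $p^k(p-1)/2$, and the semigroup $H(Q)$ must embed after scaling by $d$. Since $P_\ell$ is the only non-ordinary point of $B_\ell$ (all other points lie in one orbit under $\Aut(B_\ell)$, which is itself known), this forces $Q=P_\ell$. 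If this step turns out to be awkward, I would instead transport the normaliser of $G$: since $G$ is normal in $N_{\Aut(B_{2\ell})}(G)$, the quotient $N/G$ acts on $B_\ell$ and fixes $Q$. It is then enough to check that this induced group is large enough that $Q$ must be the unique point fixed by a nontrivial $p$-element of $\Aut(B_\ell)$.

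Case 1: $p\nmid d$. Now all hypotheses of Theorem \ref{intro1}(b) hold, so $p^\ell+1$ divides $p^{2\ell}+1$. However $p^{2\ell}+1=(p^\ell+1)(p^\ell-1)+2$, so $p^{2\ell}+1\equiv 2 \pmod{p^\ell+1}$, and since $p^\ell+1>2$ this is a contradiction.

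Case 2: $p\mid d$. This is the main obstacle. Here I would use Riemann--Hurwitz with the Hilbert different formula at $P_{2\ell}$:
\[
p^{2\ell}(p-1)-2=d\bigl(p^\ell(p-1)-2\bigr)+\sum_{i\ge0}\bigl(|G_i|-1\bigr)+(\text{other ramification}).
\]
With $v(x)=-p$ and $v(y)=-(p^{2\ell}+1)$, I would compute the lower ramification filtration of $S$ at $P_{2\ell}$. An element $(x,y)\mapsto(x+b,\,y+f(x))$ with $b\ne0$ should lie in $G_i$ exactly for $i\le p^{2\ell}+1-p\deg f$, or similar. The $y$-translations should lie in $G_i$ for all $i\le p^{2\ell}+1$. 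Take the largest jump present in $G\cap S$. This contributes at least $(p-1)(p^{2\ell}+1)$, or at least of order $p^{2\ell}$ from the $x$-translations, to the different. On the other hand $d\ge p$ makes $d(2g_\ell-2)$ already close to $p^{\ell+1}(p-1)$. Since $d\le (g_{2\ell}-1)/(g_\ell-1)\approx p^\ell$, the delicate point is to balance these two terms for every possible size of $G\cap S$. I expect a case analysis on whether $G$ contains a $y$-translation, for which the jump is too large to fit, or only elements with $b\neq0$. In the latter case the quotient $B_{2\ell}/(G\cap S)$ has a computable genus, which I would compare against divisibility and genus constraints for a tame Galois map onto $B_\ell$ from the first step. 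Each subcase should give a contradiction when $\ell>2$.
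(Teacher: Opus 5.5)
\textbf{There is a genuine gap: the wild case $p\mid d$ is not proved.}

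Your Case~1 matches the paper's treatment of tame $G$ (Theorem~\ref{first_groups}(1), i.e.\ Theorem~\ref{thm: main}(b)). However, your justification of $\phi(P_{2\ell})=P_\ell$ is wrong. $\Aut(B_\ell)$ is finite while $B_\ell$ has infinitely many affine points, so these cannot form a single orbit. The normaliser fallback also fails whenever $N(G)/G$ contains no $p$-element: for $G=\langle\alpha^s\rangle$ with $\mu\neq 1$ the normaliser is $H$ itself.

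The main gap is Case~2, which is a programme rather than a proof, and its guiding expectation is false. Your own formula $i\le p^{2\ell}+1-p\deg f$ with $\deg f=p^{2\ell-1}$ gives $i\le 1$. The paper confirms this: $v_{P_{2\ell}}(\beta_{d,0}(\tau)-\tau)=2$ for the uniformizer $\tau=x^{p^{2\ell-1}}/y$. So when $G\cap E=\{\mathrm{Id}\}$, the different at $P_{2\ell}$ is only $(|G|-1)+(p^r-1)$, nothing of order $p^{2\ell}$. Nor does Theorem~\ref{intro1}(b) apply as stated to $B_{2\ell}/(G\cap S)$, which is only known to be some $Z^p+Z=F(X)$ with $\deg F=p^{2\ell-r}+1$.

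What you are missing is the paper's exact genus computation (Section~\ref{Section5}); the statement is Corollary~\ref{result2} with $k=2\ell<\ell^2$. Take $G=U\rtimes C$ with $|U|=p^r$, $|C|=m$ and $G\cap E=\{\mathrm{Id}\}$ (if $E\le G$ the quotient is rational). Then at $P_{2\ell}$ one has $G_0=G$, $G_1=U$, $G_2=\{\mathrm{Id}\}$, and the only other ramification is tame, on the $G$-orbits of the $p$ points with $x=0$. Riemann--Hurwitz then yields exactly $m(p^\ell t+1)=t(p^{2\ell-r}+1)$ with $t=\mathrm{ord}(\mu)\mid p-1$. Hence $t\mid m$ and $p^{2\ell-r}+1=u(tp^\ell+1)$ with $u=m/t$. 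Lemma~\ref{apdp}, with $P=p^\ell$ and $A=p^{\ell-r}$ (note $r>\ell$ would force $u<1$), gives $t=p^{\ell-r}$, so $r=\ell$ and $m=1$. This also disposes of $r=0$ without ever locating $\phi(P_{2\ell})$.

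The surviving case is $G\le G^{(1)}_{P_{2\ell}}$ of order $p^\ell$ with $G\cap E=\{\mathrm{Id}\}$. Here no balancing of Riemann--Hurwitz can succeed, because these quotients have genus exactly $p^\ell(p-1)/2$. The paper cites Theorem~\ref{first_groups}(2) for this case, but that count assumes $G^{(1)}_{P_k}$ is abelian. In fact $\beta_{d',e'}\circ\beta_{d,e}=\beta_{d+d',\,e+e'+L_{d'}(d)}$ is not symmetric in $d,d'$. The normaliser of $G$ in $G^{(1)}_{P_{2\ell}}$ is then its centraliser, and the $p$-part of $|N(G)/G|$ is $p^{2\ell+1}=|\Aut(B_\ell)|_p$, so counting yields nothing.

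A structural argument is needed. For instance:
\begin{enumerate}
\item $B_{2\ell}/G$ is $Z^p+Z=F(X)$ with $X=\prod_{w\in W}(x-w)$, where $W$ is the set of $x$-shifts occurring in $G$.
\item An isomorphism with $B_\ell$ must match the points at infinity: $EG/G$ is a nontrivial $p$-group fixing the image of $P_{2\ell}$, and nontrivial $p$-elements of $\Aut(B_\ell)$ fix only $P_\ell$.
\item This forces $F(X)\equiv fX^{p^\ell+1}$ modulo additive polynomials and $\{h^p+h\}$.
\item Then $x^{p^{2\ell}+1}\equiv fX^{p^\ell+1}$ modulo additive terms and $\{g^p+g: g\in K[x]\}$.
\item The reduced form of the right side has the nonzero $x^2$-coefficient $(-1)^\ell f^{p^{-\ell}}a_0$, where $a_0\neq 0$ is the coefficient of $x$ in $X$. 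The left side has no $x^2$-term, a contradiction.
\end{enumerate}
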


The structure of this article will be as follows. 
In \Cref{backg} we introduce notation and some preliminary lemmas.
We recall some results related to  Riemann-Roch spaces and a basis for the Riemann-Roch spaces of certain divisors of the curve $B_k$.

In \Cref{maint} we prove three preliminary lemmas related to bounds for the degree of a map between the curves $B_k$ and $B_\ell$ and we prove the main result for non-Galois morphisms,  \Cref{thm: main}.

In \Cref{sec: Galois morphisms}, we start our study of the case when the morphism between the curves $B_k$ is Galois, this is, when $B_\ell \simeq B_k/G$ for $G$ a subgroup of the automorphism group $\Aut(B_k)$ of $B_k$.
We use the precise description of $\Aut(B_k)$ given in theorems 12.1 and 12.6 of \cite{HKT}  to classify certain $G$ for which we have such a morphism. 
This is  \Cref{first_groups}.
The proof of \Cref{first_groups} (part 1) uses \Cref{thm: main}.

In \Cref{Section5}, we compute the genus of the quotient $B_k/G$ for any subgroup $G\leq \Aut(B_k)$, and we use this result to prove in \Cref{sec: Cond quot} some conditions under which $B_\ell \simeq B_k/G$. 

Finally, in \Cref{sec:inf family} we provide three results: an infinite subfamily of curves $B_k$ for which there is no Galois morphism between any two members; a second result that assure us that there is no Galois map between curves $B_k$ and $B_\ell$ when $k< \ell^2$ except in the known case $|G|=(p^k+1)/(p^\ell+1)$; and the non-existence of Galois cover between the curve $B_{2\ell}$ and $B_\ell$ when $\ell>2$.

\section{Background}\label{backg}

\subsection{Relevant notions}
As above, we denote by $\mathcal{X}$ a non-singular, projective, geometrically irreducible algebraic curve of genus $\mathfrak{g}(\mathcal{X})\geq 1$ and defined over $K$.
We denote by $K(\mathcal{X})$ the field of rational functions over $\mathcal{X}$.

For every place $P$ of $\mathcal{X}$ we denote its discrete valuation by $v_P$, namely the function $v_P: K(C)^* \longrightarrow \Z$, defined as $v_P(f)=m$ if $f= u \cdot t^m$ for $u$ an invertible element and $t$ a local parameter (uniformizing element) at $P$. If $m>0$, we say that $f$ has a zero of order $m$ at $P$.  If $m<0$, we say that $f$ has a pole of order $-m$ at $P$.

A (Weil) divisor on the curve $\mathcal{X}$ is an element of the free abelian group generated by the set of points of $\mathcal{X}$. This means that a divisor is a formal sum of the form $D=\sum n_i P_i$ with $n_i \in \Z$ and $n_i=0$ for all but a finite number of $P_i$ in $\mathcal{X}$. 
Its degree is defined as $\deg D =\sum n_i$. We say that a divisor $D$ is effective when all $n_i \geq 0$.

\begin{definition}
	If $f$ is a rational function on $\mathcal{X}$, not identically $0$, we define the \textit{divisor of} $f$ to be 
	\[
	\mathrm{div}(f) = \sum_{P \in \mathcal{X}} v_P(f) P.
	\]
    A divisor $D$ is called a \textit{principal divisor} if it is of the form $D=\mathrm{div}(f)$ for any $f\in K(\mathcal{X})^*$.
\end{definition}

Given a morphism $\phi~: \mathcal{X} \longrightarrow \mathcal{Y}$, we denote by $\phi^*$ the pullback of $\phi$. This is an injective homomorphism between the function fields  $\phi^*:K(\mathcal{Y}) \longrightarrow K(\mathcal{X})$
defined by $\phi^*(f)=f \circ \phi$. The pullback map allows one to compute valuations of rational functions on points of $\mathcal{Y}$ in terms of valuations of functions on points of $\mathcal{X}$ and the ramification structure of $\phi$. 

Let $P$ be a place of $\mathcal{X}$. The ramification index of $\phi$ at $P$, denoted by $e_\phi (P)$, is the quantity
\[
e_\phi(P)= v_P(\phi^* (t_{\phi(P)})),
\]
where $t_{\phi(P)} \in K(\mathcal{Y})$ is a uniformizer at $\phi(P)$. Note that $e_\phi(P) \geq 1$. We say that $\phi$
is unramified at $P$ if $e_{\phi}(P) = 1$  and that $\phi$ is unramified if it is unramified at every point of $\mathcal{X}$.
We say that $\phi$ is ramified at $P$ if $e_{\phi}(P) > 1$ and it is totally ramified at $P$ if $e_{\phi}(P) = \deg(\phi)$.

For any rational function $f\in K(\mathcal{Y})$, we have that $v_P(\phi^*(f))=e_{\phi}(P) \cdot v_{\phi(P)}(f)$.

With this notation, the Riemann-Roch space is a $K$-vector space associated to the divisor $D$ on the curve $\mathcal{X}$, and is defined by
$$\mathcal{L}(D)\coloneqq \{f \in K(\mathcal{X})^* \mid \mathrm{div}(f) + D \geq 0\}.$$

\begin{definition}
    For any place $P$ of a curve $\mathcal{X}$, a non-negative integer $n$ is a pole number of $P$ if there is a function $z$ such that $\mathrm{div}(z)_{\infty}= nP$; otherwise, $n$ is a gap number of $P$.  
\end{definition}

\subsection{The curves $B_k$}

Let $\mathcal{B}$ be the family of curves $B_k$, $k\in \mathbb{N}_{\geq 1}$, given by the normalization of the curve with affine equation
\[
y^p+c y=x^{p^k+1}.
\]
In the plane model, the (unique) point at infinity is the only singular point $\tilde{P}_k$, and it has only one branch centered at it. As a consequence, in the nonsingular model $B_k$, we call the (unique) point at infinity of $B_k$ the point $P_k$ corresponding to $\tilde{P}_k$ in $B_k$.
The function field of $B_k$ is $K(B_k)=K(x,y)$, and has genus $\mathfrak{g}_k=p^k(p-1)/2$.
We use the words point and place interchangeably, and we are justified in doing so because we consider nonsingular curves.

The Riemann-Roch spaces supported at $P_k$ are described in the following result, which we will use in the next section.

\begin{theorem}\cite[Prop. 6.4.1]{StichBook}\label{th: basis RR Sp}
    Let $K=\overline{\F_p}$. Let $c \in K$ be nonzero. 
    Let $B_k$ be the curve $y^p+c y=x^{p^k+1}$, and let $P_k$ be the unique point at infinity of $B_k$. 
    The products 
    \[
        x^iy^j \quad \text{ for } i,j \geq 0, \ j\leq p-1, \text{ and } ip+jm \leq n,
    \]
    form a $K$-basis of the vector space $\mathcal{L}(nP_{k})$, for any positive integer $n$.
\end{theorem}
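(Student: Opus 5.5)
The plan is to compute the pole orders of $x$ and $y$ at $P_k$, show that the proposed monomials have pairwise distinct pole orders, and then use integrality over $K[x]$ to show that they span $\mathcal{L}(nP_k)$. Throughout, write $m=p^k+1$ and $F=K(x,y)$.

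\textbf{Step 1: pole orders at $P_k$.} The extension $F/K(x)$ has degree $p$. The only pole of $x$ in $F$ is $P_k$, because the plane model has a single branch at infinity. Hence the pole of $x$ in $K(x)$ is totally ramified in $F$, and $v_{P_k}(x)=-p$. Taking valuations in $y^p+cy=x^m$ gives $v_{P_k}(y^p+cy)=-mp$. Since $y$ has a pole at $P_k$, the term $y^p$ dominates, so $p\,v_{P_k}(y)=-mp$ and $v_{P_k}(y)=-m$. (Alternatively, one can derive the total ramification from $\gcd(m,p)=1$ via the standard Artin--Schreier-type valuation argument.)

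\textbf{Step 2: the monomials lie in $\mathcal{L}(nP_k)$ and are independent.} The functions $x$ and $y$ are integral over $K[x]$, so they have no poles at affine places. Therefore $x^iy^j$ has its only pole at $P_k$, of order $ip+jm$, and it lies in $\mathcal{L}(nP_k)$ whenever $ip+jm\le n$. For $0\le j\le p-1$ the integers $ip+jm$ are pairwise distinct. Indeed, $ip+jm=i'p+j'm$ forces $p\mid (j-j')m$, and $\gcd(p,m)=1$ together with $|j-j'|<p$ gives $j=j'$, hence $i=i'$. Functions with distinct valuations at one place are linearly independent.

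\textbf{Step 3: spanning.} This is the only step with real content. Let $f\in\mathcal{L}(nP_k)$. Then $f$ is regular at every affine place, so $f$ lies in the integral closure of $K[x]$ in $F$. I claim this integral closure is $K[x,y]$. The affine curve $y^p+cy-x^m=0$ is nonsingular: its partial derivative in $y$ is the nonzero constant $c$. Hence $K[x,y]$ is integrally closed. Since $K[x,y]$ is free over $K[x]$ with basis $1,y,\dots,y^{p-1}$, we can write
\[
f=\sum_{j=0}^{p-1}a_j(x)\,y^j, \qquad a_j\in K[x].
\]
By Step 2, the nonzero monomials $x^iy^j$ occurring in this expansion have pairwise distinct valuations at $P_k$. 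So $v_{P_k}(f)$ equals the minimum of these valuations, and every monomial that occurs satisfies $ip+jm\le n$. Thus $f$ is a linear combination of the listed monomials, which completes the proof.

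The main point needing care is the identification of the integral closure with $K[x,y]$, via the smoothness of the affine model. An alternative check is to count: the number of monomials with $ip+jm\le n$ should match $\ell(nP_k)$ from Riemann--Roch for large $n$, using $\mathfrak{g}_k=(p-1)(m-1)/2$. However, the direct integrality argument above avoids that computation.
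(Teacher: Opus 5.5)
Your proof is correct and complete. The paper gives no argument of its own for this statement; it imports it by citation from Stichtenoth, so your proposal supplies what the paper outsources.

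It is worth contrasting your spanning step with the other standard route, the counting argument you sketch at the end. That route keeps the independence from your Step~2 and then shows that the number of admissible monomials equals $\dim_K\mathcal{L}(nP_k)$. Since $\gcd(p,m)=1$, the numerical semigroup $\langle p,m\rangle$ has exactly $(p-1)(m-1)/2=\mathfrak{g}_k$ gaps, the largest being $2\mathfrak{g}_k-1$. Riemann--Roch then gives the count for $n\ge 2\mathfrak{g}_k-1$, and smaller $n$ reduce to large $N$ via $\mathcal{L}(nP_k)\subseteq\mathcal{L}(NP_k)$ together with the distinct pole orders. That route needs the genus as an input (from the different of $F/K(x)$), but no information about the integral closure of $K[x]$.

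Your route needs only the Jacobian criterion and the normality of nonsingular affine curves. It also proves the stronger statement $\bigcup_{n}\mathcal{L}(nP_k)=K[x,y]$. Combined with Riemann--Roch for large $n$, it even yields $\mathfrak{g}_k=p^k(p-1)/2$ and identifies the Weierstrass semigroup at $P_k$ as $\langle p,p^k+1\rangle$, which is exactly what the proof of Theorem~\ref{thm: main}(b) uses.

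One fact you use without proof is $[F:K(x)]=p$. You need it both for the freeness of $K[x,y]$ over $K[x]$ and for $K[x,y]\cong K[X,Y]/(Y^p+cY-X^m)$. Your parenthetical remark settles it: any place over the pole of $x$ has ramification index divisible by $p$ because $\gcd(m,p)=1$, and that index is at most $[F:K(x)]\le p$.
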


\begin{remark} \label{iso between Bk + and -}
Recall our notation that 
$B_k:=B_{k,c}$ is given by the normalization of the curve with affine equation
$y^p+c y=x^{p^k+1}$.
There are $K$-isomorphisms between the curves $B_{k,c}$
for different values of $c$. For example, when $c=-1$,
we can map the curve $B_{k,-1} : y^p-y=x^{p^k+1}$ to the curve 
$B_{k,1}: Y^p+Y=X^{p^k+1}$ (where $c=+1$)
by taking the change of variable $y=aY$, $x=bX$, where $a^{p-1}=-1$ and $b^{p^k+1}=a^p$.
This is an isomorphism that maps the point at infinity to the point at infinity. 
Although both curves are defined over $\F_p$, the isomorphism is not.
The curves have a different L-polynomial over $\F_p$.
\end{remark}

Using  this isomorphism between the curves $B_{k,1}$ and $B_{k,-1}$, we have:

\begin{corollary}\label{morphismiff}
    There is a  morphism between the curves $B_{k,1}$ and $B_{\ell, 1}$, if and only if
    there is a  morphism between the curves $B_{k,-1}$ and $B_{\ell, -1}$.
\end{corollary}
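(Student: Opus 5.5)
The plan is to compose with the isomorphisms of Remark~\ref{iso between Bk + and -} in both directions. For each $k$ we fix an isomorphism $\psi_k : B_{k,-1}\longrightarrow B_{k,1}$, given on function fields by $y=aY$, $x=b_kX$. Here $a^{p-1}=-1$ and $b_k^{p^k+1}=a^p$, and both constants lie in $K$ because $K$ is algebraically closed. The first step is to check that this substitution is an isomorphism of function fields. Substituting into $y^p-y=x^{p^k+1}$ gives $a^pY^p-aY=b_k^{p^k+1}X^{p^k+1}$. Since $a^{p-1}=-1$, we have $-a=a^p$, so the left side equals $a^p(Y^p+Y)$. Dividing by $a^p$ then yields $Y^p+Y=X^{p^k+1}$. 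The map $K(X,Y)\to K(x,y)$ defined by $X\mapsto x/b_k$ and $Y\mapsto y/a$ is therefore a well-defined $K$-isomorphism, and its inverse is given by the same substitution run backwards. Because the curves are nonsingular projective models, an isomorphism of function fields induces an isomorphism of curves.

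Next, suppose $\phi : B_{k,1}\longrightarrow B_{\ell,1}$ is a nonconstant morphism. We set $\phi' = \psi_\ell^{-1}\circ\phi\circ\psi_k : B_{k,-1}\longrightarrow B_{\ell,-1}$. This is a composition of nonconstant morphisms, so it is a nonconstant morphism. Conversely, given $\phi'$, the composition $\psi_\ell\circ\phi'\circ\psi_k^{-1}$ gives a morphism $B_{k,1}\longrightarrow B_{\ell,1}$. These two constructions prove the equivalence.

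As a bonus for later use, the construction preserves several properties. The degree is unchanged. Ramification indices are unchanged, because isomorphisms have ramification index $1$ everywhere. The Galois property is preserved, since conjugating the function-field extension by isomorphisms carries automorphism groups to automorphism groups. The condition of sending the point at infinity to the point at infinity is also preserved, because the $\psi$'s fix the points at infinity: $x$ and $X$ have their only poles there.

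There is no real obstacle in this argument. The only point needing care is that $a$ and $b_\ell$ depend on $\ell$ and $b_k$ depends on $k$. We must use the separate isomorphisms $\psi_k$ and $\psi_\ell$ rather than a single change of variables, since one $b$ cannot serve both exponents $p^k+1$ and $p^\ell+1$.
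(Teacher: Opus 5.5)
Your proof is correct and is the same argument as the paper's: conjugate a given morphism by the isomorphisms $\psi_k$ and $\psi_\ell$ of Remark~\ref{iso between Bk + and -}, once in each direction. The differences are cosmetic: you orient $\psi_k$ as $B_{k,-1}\to B_{k,1}$ where the paper uses the reverse, and you explicitly check that the substitution $y=aY$, $x=b_kX$ respects the defining equations, which the paper takes for granted.
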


\begin{proof}
All maps are defined over $K$.
    Assume a  morphism $\phi: B_{k,-1} \longrightarrow B_{\ell,-1}$ exists. 
    Let us denote by $\psi_k$ the isomorphism
    $\psi_k : B_{k,1}\longrightarrow B_{k,-1}$   that we have defined before in \Cref{iso between Bk + and -}. Then 
    $$(\psi_\ell)^{-1}\circ \phi \circ \psi_k: B_{k,1} \longrightarrow B_{\ell,1}$$ is a morphism from $B_{k,1}$ to $B_{\ell,1}$. 

    Conversely, given a  morphism $\phi: B_{k,1} \longrightarrow B_{\ell,1}$, the same argument holds.
\end{proof}

\section{Non-Galois Case }\label{maint}

In this  section, we find conditions for the existence of a (not necessarily Galois)
morphism between the curves of the family $\mathcal{B}$. 
Let us first state and prove an elementary lemma about the condition 
$p^\ell+1$ divides $p^{k}+1$.

\begin{lemma}\label{lem:divisibility cond}
    Given integers $\ell, k >1$, $p^\ell+1$ divides $p^{k}+1$ if and only if $\ell \mid k$ and $\frac{k}{\ell}$ is odd.
\end{lemma}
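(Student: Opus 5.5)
The plan is to prove both directions by reducing $p^k$ modulo $p^\ell+1$, using the congruence $p^\ell \equiv -1 \pmod{p^\ell+1}$.

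For the ``if'' direction, suppose $k=\ell t$ with $t$ odd. Then $p^k=(p^\ell)^t\equiv(-1)^t=-1 \pmod{p^\ell+1}$, so $p^\ell+1$ divides $p^k+1$. Equivalently, one can use the factorization $X^t+1=(X+1)(X^{t-1}-X^{t-2}+\cdots+1)$ for odd $t$, with $X=p^\ell$.

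For the ``only if'' direction, write $k=q\ell+r$ with $0\le r<\ell$, by Euclidean division. Modulo $p^\ell+1$ this gives
\[
p^k+1\equiv (-1)^q p^r+1 \pmod{p^\ell+1}.
\]
The key step is to check that the integer $(-1)^q p^r+1$ lies strictly between $-(p^\ell+1)$ and $p^\ell+1$, and to see when it vanishes.

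\begin{itemize}
\item If $q$ is even, the residue is $p^r+1$. Since $1\le p^r+1\le p^{\ell-1}+1<p^\ell+1$, it is never $0$ modulo $p^\ell+1$, so there is no divisibility.
\item If $q$ is odd, the residue is $1-p^r$. Since $0\ge 1-p^r\ge 1-p^{\ell-1}>-(p^\ell+1)$, it is divisible by $p^\ell+1$ only if $1-p^r=0$, that is, $r=0$.
\end{itemize}

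Hence divisibility forces $r=0$ and $q$ odd, which means $\ell\mid k$ and $k/\ell$ is odd.

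The only care needed is in these size estimates; they rely only on $p\ge 2$ and $r<\ell$, and do not use the standing hypothesis $\ell,k>1$. No results from earlier in the paper are needed; the argument is purely elementary number theory.
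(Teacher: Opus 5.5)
Your proof is correct and is essentially the paper's argument: the ``if'' direction uses the same factorization of $X^t+1$ for odd $t$, and for the converse the paper does long division of $p^k+1$ by $p^\ell+1$, whose alternating quotient $p^{k-\ell}-p^{k-2\ell}+\cdots$ leaves exactly your remainder $(-1)^q p^r+1$. Your explicit bound $|(-1)^q p^r+1|<p^\ell+1$ forces that remainder to vanish, hence $q$ odd and $r=0$, and so supplies the justification that the paper's final step (that the last term must be $p^0$ with a plus sign) leaves implicit.
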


\begin{proof}

    $[\Leftarrow]$ Assume first $\ell\mid k$ and $\frac{k}{\ell}=m$ is odd. Then we have 
    \[
    p^k+1 = (p^{\ell})^m +1=(p^{\ell}+1)\left(\sum_{i=0}^{m-1} (-1)^i(p^{\ell})^i\right). 
    \]
    
    $[\Rightarrow]$ If  $p^\ell+1$ divides $p^{k}+1$, using the division algorithm, we have that 
    \[
    p^k+1 = (p^\ell+1)(p^{k-\ell} - p^{k-2\ell} + p^{k-3\ell} - \cdots + p^{k-m\ell}),
    \]
    for some $m\in \N$. As
    \[
    (p^{k-\ell} - p^{k-2\ell} + p^{k-3\ell} - \cdots + p^{k-m\ell}) | (p^k+1),
    \] 
    we have $p^{k-m\ell}=1$, therefore $k-m\ell = 0$. Thus, $\ell|k$ and $m=\frac{l}{k}$ is odd because otherwise we would not obtain a positive sign in the last term of the sum: 
    \[
    p^{k-\ell} - p^{k-2\ell} + p^{k-3\ell} - \cdots + p^{k-m\ell}. \qedhere
    \]
\end{proof}

We need three more lemmas which we will use in the proof.

\begin{lemma}\label{rh_ineq}
Let $p$ be an odd prime. Assume there is a 
 non-trivial morphism  $B_{k} \longrightarrow B_{\ell}$ of degree $d$.
Then 
$$ d\leq \frac{p^{k+1}-p^k-2}{ p^{\ell+1}-p^\ell-2} $$
\end{lemma}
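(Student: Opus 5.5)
This bound follows directly from the Riemann--Hurwitz formula applied to $\phi\colon B_k\longrightarrow B_\ell$, combined with the genus formula $\mathfrak{g}_k=p^k(p-1)/2$ recalled in \Cref{backg}. The plan is to write Riemann--Hurwitz in the form
\[
2\mathfrak{g}_k-2 = d\,(2\mathfrak{g}_\ell-2) + \deg R,
\]
where $R$ is the ramification (different) divisor of $\phi$. The morphism need not be separable a priori, so I first reduce to that case. In characteristic $p$ every non-constant morphism factors as a power of Frobenius followed by a separable morphism. Frobenius preserves the genus, and the Frobenius twist of $B_\ell$ is again a curve of the same type: it is $y^p+c^py=x^{p^\ell+1}$, i.e.\ $B_{\ell,c^p}$. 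The degree bound I want only concerns degrees, and the inseparable part only increases $d$ while leaving the genera unchanged. I would therefore argue the inequality for the separable factor $\phi_s$ of degree $d_s$, keeping track of the Frobenius part as follows.

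Since $R\geq 0$ (the different is effective, as in \cite{hartshorne1977algebraic}), Riemann--Hurwitz for a separable morphism gives $2\mathfrak{g}_k-2\geq d\,(2\mathfrak{g}_\ell-2)$. This requires $\mathfrak{g}_\ell\geq 2$, which holds because $p$ is odd and $\ell\geq1$, so $\mathfrak{g}_\ell=p^\ell(p-1)/2\geq 3$. Substituting $2\mathfrak{g}_k-2=p^{k+1}-p^k-2$ and $2\mathfrak{g}_\ell-2=p^{\ell+1}-p^\ell-2>0$ and dividing yields exactly
\[
d\leq \frac{p^{k+1}-p^k-2}{p^{\ell+1}-p^\ell-2}.
\]

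The only delicate point is the inseparable case. Writing $\phi=\phi_s\circ F^r$ with $F^r$ a Frobenius of degree $p^r$ from $B_k$ to its twist $B_k^{(p^r)}$, the genus is preserved, but the total degree is $p^r d_s$. The naive bound then only controls $d_s$, not $p^r d_s$. I expect this to be the main obstacle. To handle it, I would factor the other way instead: $\phi=F^r\circ\psi$, with $\psi\colon B_k\to B_\ell^{(p^{-r})}$ separable. This still only bounds $\deg\psi$.

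Since the paper cites the bound $d\leq(\mathfrak{g}(\mathcal{X})-1)/(\mathfrak{g}(\mathcal{Y})-1)$ from \cite{hartshorne1977algebraic}, which is stated for finite separable morphisms, I would state that morphisms here are taken separable, or equivalently work with the separable degree. That is consistent with how the lemma is used later, where $\deg\phi$ is assumed prime to $p$, which forces separability. With that convention the proof is the one-line Riemann--Hurwitz computation above.
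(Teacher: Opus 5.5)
Your proof is correct and follows the paper's argument: Riemann--Hurwitz with effective different, together with $\mathfrak{g}_k=p^k(p-1)/2$, then dividing by $2\mathfrak{g}_\ell-2>0$. The separability convention you adopt is not a weakness but is actually required, and the paper's one-line proof assumes it tacitly; this is harmless because every application has degree prime to $p$ or is a Galois quotient map. To see that separability is needed: for $c=\pm1$ and $p^\ell+1\mid p^k+1$, the map $(x,y)\mapsto(x^{p^r t},y^{p^r})$ with $t=(p^k+1)/(p^\ell+1)$ is a morphism $B_k\to B_\ell$ of degree $p^r t$, which is unbounded in $r$.
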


\begin{proof}
By the Riemann-Hurwitz formula, and the fact that the genus
of $B_k$ is $(p-1)p^k/2$.
\end{proof}

\begin{lemma}\label{kl_ineq}
Let $p$ be an odd prime. Let $1<\ell <k$.
Then 
$$\frac{p^k+1}{p} > \frac{p^{k+1}-p^k-2}{ p^{\ell+1}-p^\ell-2} $$
\end{lemma}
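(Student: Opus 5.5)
The plan is to clear denominators and reduce the inequality to an elementary comparison of polynomials in $p$, using $\ell \ge 2$ and $k \ge \ell+1$. Since $p \ge 3$ and $\ell \ge 2$, the quantity $D := p^{\ell+1}-p^\ell-2 = p^\ell(p-1)-2$ is positive. The claimed inequality is therefore equivalent to
\[
(p^k+1)\bigl(p^{\ell+1}-p^\ell-2\bigr) > p\bigl(p^{k+1}-p^k-2\bigr).
\]

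First I expand both sides. The left-hand side is
\[
p^{k+\ell+1}-p^{k+\ell}-2p^k+p^{\ell+1}-p^\ell-2,
\]
and the right-hand side is
\[
p^{k+2}-p^{k+1}-2p.
\]
Subtracting, it suffices to show
\[
p^{k+\ell}(p-1) - p^{k+1}(p-1) - 2p^k + p^\ell(p-1) + 2p - 2 > 0.
\]

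The key step uses $\ell \ge 2$. Then $p^{k+\ell}(p-1) \ge p^{k+2}(p-1)$, so the difference of the first two terms satisfies
\[
p^{k+\ell}(p-1)-p^{k+1}(p-1) \ge p^{k+1}(p-1)^2 .
\]
Since $p\ge3$, we have $p^{k+1}(p-1)^2 \ge 3\cdot 4\, p^k > 2p^k$, which absorbs the only negative term $-2p^k$. The remaining terms $p^\ell(p-1)+2p-2$ are positive. Hence the difference is strictly positive, which proves the lemma.

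There is no real obstacle here; the only care needed is bookkeeping the signs after expansion and noting where the hypothesis $\ell>1$ is used. For $\ell=1$ the leading terms $p^{k+\ell}(p-1)$ and $p^{k+1}(p-1)$ cancel, and the inequality reduces to $-2p^k+(p-1)+2p-2>0$, which is false. This is exactly why the hypothesis $1<\ell$ is needed. The hypothesis $\ell<k$ is not actually needed for the inequality itself; it only matters in the context where the lemma is applied, namely combining it with \Cref{rh_ineq} to get $d<(p^k+1)/p$.
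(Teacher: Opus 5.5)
Your proof is correct and follows the paper's argument: you clear the positive denominator, expand, and use $\ell\ge 2$, $p\ge 3$ to show that $p^{k+\ell}(p-1)-p^{k+1}(p-1)-2p^k=p^k\bigl(p^\ell(p-1)-p^2+p-2\bigr)$ is positive while the remaining terms $p^\ell(p-1)+2p-2$ are positive. This is exactly the paper's grouping. One small slip in your side remark: for $\ell=1$ the leftover expression is $-2p^k+p(p-1)+2p-2$, not $-2p^k+(p-1)+2p-2$. It is still negative for $k\ge 2$, so your conclusion there stands.
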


\begin{proof}
Multiplying both sides of the claimed inequality by the positive denominator 
\(p(p^{\ell+1}-p^\ell-2)\) (this denominator is positive since \(p\ge 3\) and \(\ell\ge 2\), 
so \(p^\ell(p-1)\ge p^2\cdot 2 = 2p^2 > 2\)), we reduce to showing
\[
(p^k+1)(p^{\ell+1}-p^\ell-2) > p(p^{k+1}-p^k-2).
\]

Expanding both sides gives
\[
(p^k+1)p^\ell(p-1)-2(p^k+1) - p^{k+2}+p^{k+1}+2p>0.
\]
This can be grouped as
\[
p^k\big(p^\ell(p-1)-p^2+p-2\big) + \big(p^\ell(p-1)+2p-2\big)>0.
\]

For \(\ell\ge 2\) and \(p\ge 3\) we have
\[
p^\ell(p-1)-p^2+p-2 \;\ge\; p^2(p-1)-p^2+p-2 = p^3-2 p^2+p-2 >0,
\]
and clearly \(p^\ell(p-1)+2p-2 >0\). Hence both summands are strictly positive.
\end{proof}

\begin{lemma}\label{kl_ineq_2}
Let $p$ be an odd prime. Let $1<\ell <k$.
Then 
$$\frac{2(p^k+1)}{p^\ell+1} > \frac{p^{k+1}-p^k-2}{ p^{\ell+1}-p^\ell-2} $$
\end{lemma}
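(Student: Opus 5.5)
We follow the same route as in \Cref{kl_ineq}: clear denominators and reduce to a polynomial inequality in $p$. Both denominators $p^\ell+1$ and $p^{\ell+1}-p^\ell-2=p^\ell(p-1)-2$ are positive for $p\ge 3$, $\ell\ge 2$. So the claim is equivalent to
\[
2(p^k+1)\bigl(p^{\ell+1}-p^\ell-2\bigr) > (p^\ell+1)\bigl(p^{k+1}-p^k-2\bigr).
\]

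Next we expand both sides and collect the terms carrying the factor $p^k$. The left side is
\[
2p^{k+\ell+1}-2p^{k+\ell}-4p^k+2p^{\ell+1}-2p^\ell-4 .
\]
The right side is
\[
p^{k+\ell+1}-p^{k+\ell}-2p^\ell+p^{k+1}-p^k-2 .
\]
Subtracting, the inequality becomes
\[
p^k\bigl(p^{\ell+1}-p^\ell-p-3\bigr) + \bigl(2p^{\ell+1}-2\bigr) > 0 .
\]

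The second summand $2p^{\ell+1}-2$ is clearly positive. For the first, since $\ell\ge 2$ we have
\[
p^{\ell+1}-p^\ell = p^\ell(p-1) \ge 2p^2 ,
\]
so
\[
p^{\ell+1}-p^\ell-p-3 \ge 2p^2-p-3 = (2p-3)(p+1) > 0
\]
for $p\ge 3$. Both summands are therefore strictly positive, which proves the inequality.

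The only real obstacle is bookkeeping: the signs in the expansion must be tracked correctly so that the coefficient of $p^k$ comes out right. Once that coefficient is isolated, positivity follows immediately from $\ell\ge 2$ and $p\ge 3$. The hypothesis $\ell<k$ is not actually needed here.
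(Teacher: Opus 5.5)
Your proof is correct and follows the paper's own argument: clear the positive denominators, expand to $p^k(p^{\ell+1}-p^\ell-p-3)+2(p^{\ell+1}-1)>0$, and check that both summands are positive. The only difference is that you bound $p^\ell(p-1)\ge 2p^2$ rather than using the paper's $p^2(p-1)$; your bound is weaker but still suffices, and your remark that $\ell<k$ is never used is accurate.
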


\begin{proof}
The denominators are positive for \(p\ge3\) and \(\ell\ge2\). Multiplying both sides by 
\((p^\ell+1)(p^{\ell+1}-p^\ell-2)\) we reduce to
\[
2(p^k+1)(p^{\ell+1}-p^\ell-2)>(p^{k+1}-p^k-2)(p^\ell+1).
\]
Expanding and rearranging yields
\[
p^k\big(p^{\ell+1}-p^\ell-p-3\big)+2\big(p^{\ell+1}-1\big)>0.
\]

Since \(\ell\ge2\) we have \(p^{\ell+1}-p^\ell=p^\ell(p-1)\ge p^2(p-1)\), hence
\[
p^{\ell+1}-p^\ell-p-3 \;\ge\; p^2(p-1)-p-3 \;=\; p^3-p^2-p-3.
\]
For every odd prime \(p\ge3\) this quantity is positive, and clearly 
\(p^{\ell+1}-1>0\). Therefore both summands are strictly positive, so the 
whole expression is positive, proving the claim.
\end{proof}

Here is the first main result of this paper. Part (a) is well known.

\begin{theorem}\label{thm: main}
Let $p$ be an odd prime. Let $\ell, k$ be positive integers with $1<\ell <k$. 
Let $K=\overline{\F_p}$. Let $c \in K$ be nonzero.
Let $B_k$ be the curve $y^p+c y=x^{p^k+1}$ and let 
$B_\ell$ be the curve $y^p+c y=x^{p^\ell +1}$. 
Let $P_k$ and $P_\ell$ be the points at infinity of $B_k$ and $B_\ell$,
respectively.

(a) If $p^\ell +1$  divides $p^k+1$, then
     there exists a non-trivial morphism  
    $\rho : B_{k} \longrightarrow B_{\ell}$, with the properties that
    $\rho(P_k)=P_\ell$, $\rho$ is totally ramified at $P_k$,
    and $\deg (\rho)=\frac{p^k+1}{p^\ell +1}$.

(b) Conversely, if there exists a non-trivial morphism  
    $\phi : B_{k} \longrightarrow B_{\ell}$, such that $\phi (P_k)=P_\ell$ 
    and $\phi$ is totally ramified at $P_k$, 
    and $\deg (\phi)$ is relatively prime to $p$,
    then $p^\ell +1$  divides $p^k+1$ and
    $\deg (\phi)=\frac{p^k+1}{p^\ell +1}$.
\end{theorem}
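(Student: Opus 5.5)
For part (a) we write $d=(p^k+1)/(p^\ell+1)$ and define $\rho$ on function fields by $\rho^*(X)=x^{d}$ and $\rho^*(Y)=y$, where $K(B_\ell)=K(X,Y)$ with $Y^p+cY=X^{p^\ell+1}$. This is well defined because $y^p+cy=x^{p^k+1}=(x^d)^{p^\ell+1}$. The extension $K(x,y)/K(x^d,y)$ has degree $d$, since $x$ is a root of $T^d-x^d$ and the degrees $[K(x,y):K(x)]=p$ and $[K(x^d,y):K(x^d)]=p$ are compatible. At $P_k$ we have $v_{P_k}(x)=-p$ and $v_{P_k}(y)=-(p^k+1)$. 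The pole divisor of $X$ on $B_\ell$ is $pP_\ell$, so the place below $P_k$ is the unique pole $P_\ell$ of $X$. Its ramification index is $e=v_{P_k}(x^d)/v_{P_\ell}(X)=pd/p=d=\deg\rho$, so $\rho$ is totally ramified at $P_k$.

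For part (b) we work through Weierstrass semigroups. Let $d=\deg\phi$, with $\gcd(d,p)=1$, $\phi(P_k)=P_\ell$ and $e_\phi(P_k)=d$. By \Cref{th: basis RR Sp}, the pole numbers at $P_\ell$ form the semigroup $\langle p,\,p^\ell+1\rangle$, and those at $P_k$ form $\langle p,\,p^k+1\rangle$. Since $P_\ell$ is the only pole of $X$ and of $Y$, and $\phi$ is totally ramified at $P_k$, the functions $\phi^*X$ and $\phi^*Y$ have poles only at $P_k$, of orders $dp$ and $d(p^\ell+1)$. Hence both $dp$ and $d(p^\ell+1)$ lie in $\langle p,p^k+1\rangle$.

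Any element of $\langle p,p^k+1\rangle$ not divisible by $p$ is at least $p^k+1$. Indeed, such an element has the form $ap+b(p^k+1)$ with $b\geq 1$ (and in fact $b\not\equiv 0 \pmod p$). Now $d(p^\ell+1)$ is prime to $p$, because $\gcd(d,p)=1$. Therefore
\[
d(p^\ell+1)=ap+b(p^k+1),\qquad b\ge 1,\ a\geq 0 .
\]
We bound $d$ using \Cref{rh_ineq} together with \Cref{kl_ineq_2}, which gives $d<2(p^k+1)/(p^\ell+1)$, that is, $d(p^\ell+1)<2(p^k+1)$. This forces $b=1$, so $d(p^\ell+1)=ap+p^k+1$.

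The main obstacle is showing $a=0$. The plan is to exploit the fact that $\phi^*Y$ lies in $\mathcal L(d(p^\ell+1)P_k)$. By \Cref{th: basis RR Sp}, $\phi^*Y$ is a polynomial $\sum_{i,j} c_{ij}x^iy^j$ with $j\le p-1$ and $ip+j(p^k+1)\le d(p^\ell+1)<2(p^k+1)$, so only $j\in\{0,1\}$ occur. Write $\phi^*Y=yg(x)+h(x)$, where $\deg g=a$ by the pole order computation. Similarly, $\phi^*X$ has pole order $dp<p^k+1$, because $d<(p^k+1)/p$ by \Cref{kl_ineq}. Hence $\phi^*X=f(x)$ is a polynomial in $x$ of degree $d$. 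Substituting into the relation $Y^p+cY=X^{p^\ell+1}$ gives
\[
(yg+h)^p+c(yg+h)=f^{p^\ell+1}.
\]
Expand the left side, using $y^p=-cy+x^{p^k+1}$, and compare coefficients of $y$ in the basis $\{1,y,\dots,y^{p-1}\}$ over $K(x)$. The $y$-coefficient gives $-cg^p+cg=0$, so $g^p=g$ and $g$ is constant. Hence $a=0$ and $d(p^\ell+1)=p^k+1$, which gives both the divisibility and the degree formula. The delicate points are the justification that only $j\le1$ terms occur and the careful expansion; I expect this coefficient comparison to be where the argument must be checked most closely.
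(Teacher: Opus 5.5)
Your proposal is correct and follows essentially the same route as the paper: the semigroup relation $ap+b(p^k+1)=d(p^\ell+1)$, then Lemmas~\ref{rh_ineq}, \ref{kl_ineq} and \ref{kl_ineq_2} to force $\phi^*X=f(x)$ and $\phi^*Y=y\,g(x)+h(x)$, and finally comparing $y$-coefficients over $K(x)$ to get $g^p=g$, hence $a=0$. The only difference is cosmetic. You use $\gcd(d,p)=1$ at the start to get $b\ge 1$, and Lemma~\ref{kl_ineq_2} then gives $b=1$. The paper instead first obtains $b\in\{0,1\}$ from the Riemann--Roch expansion and rules out $b=0$ at the end, because it would force $p\mid d$.
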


\begin{proof}
     (a)
    By \Cref{lem:divisibility cond}, if $p^\ell +1$  divides $p^k+1$
then $\ell$ divides $k$ and     $m=\frac{k}{\ell}$ is odd.
    An explicit morphism is $\rho:B_{k} \longrightarrow B_{\ell}$ defined as
$  \rho (x,y) = \left(x^{t}, y \right)$ on affine points, where 
    $t=\sum_{i=0}^{m-1} (-1)^i(p^{\ell})^i=(p^k+1)/(p^\ell +1)$,
    and mapping $P_k$ to $P_\ell$.
    The stated properties are easy to verify.

    This morphism $\rho$ could be composed with automorphisms of $B_k$ and $B_\ell$
    to yield other morphisms $ B_{k} \longrightarrow B_{\ell}$ of the same degree.
All of these morphisms will map $P_k$ to $P_\ell$ because all automorphisms of
$B_k$ and $B_\ell$ fix infinity.

    (b)
    Suppose there is a surjective  morphism $\phi: B_{k} \longrightarrow B_\ell$.
    The pullback $\phi^*$ of $\phi$ is an injective  field homomorphism $\phi^*: K(B_\ell) \longrightarrow K(B_k)$, where $ K(B_\ell)=K(x_\ell,y_\ell)$, with $y_\ell^p+c y_\ell=x_\ell^{p^\ell+1}$, 
    is the function field of $B_\ell$,
    and  $K(B_{k})=K(x_k,y_k)$ with $y_k^p+c y_k=x_k^{p^{k} +1}$ is the function field of  $B_k$.

    Let us call $u=\phi^*(x_\ell)$, $v=\phi^*(y_\ell)$ the images of the generators $x_\ell$ and $y_\ell$ of $K(B_\ell)$ under the pullback. They belong to $K(B_k)$ and, since $\phi^*$ is a homomorphism, they satisfy the following equation $v^p+c v=u^{p^\ell+1}$.

    Both $B_\ell$ and $B_{k}$ have a unique place at infinity, which we will call $P_{\ell}$ and $P_{k}$ respectively. We are going to consider now the valuation of the generators in the point at infinity for each of the curves, see \cite{Stichtenoth1973B}[Proposition 6.4.1] or \cite{HKT}[Lemma 12.1]. 
    At $P_{\ell}$, we have $v_{P_{\ell}}(x_\ell)=-p$ and $v_{P_{\ell}}(y_\ell)=-(p^\ell+1)$. Similarly, at $P_{k}$ we have that $v_{P_{k}}(x_k)=-p$ and $v_{P_{k}}(y_k)=-(p^{k}+1)$.

    Let the degree of $\phi$ be $d$. By assumption, 
    $\phi(P_{k})=P_{\ell}$ and
    $\phi$ is totally ramified at $P_{k}$.
    The valuation of $u$ and $v$ at the point $P_{k}$ is
    \[
    v_{P_{k}}(u)=v_{P_{k}}(\phi^*(x_\ell))=e_{\phi}(P_{k}) \cdot v_{\phi(P_{k})}(x_\ell)=e_{\phi}(P_{k}) \cdot v_{P_{\ell}}(x_\ell)=-dp
    \]
    \[
    v_{P_{k}}(v)=v_{P_{k}}(\phi^*(y_\ell))=e_{\phi}(P_{k}) \cdot v_{\phi(P_{k})}(y_\ell)=e_{\phi}(P_{k}) \cdot v_{P_{\ell}}(y_\ell)=-d(p^\ell+1).
    \]
    Since the Weierstrass semigroup of $B_{k}$ at $P_{k}$ is generated by $p$ and $p^{k}+1$, and $v$ has a pole of order $d(p^\ell+1)$ at $P_{k}$ (and no other poles because $\phi$ is totally ramified at the point $P_k$) 
    we have
    \begin{equation}\label{ab_eqn}
    ap+b(p^k+1)=d(p^\ell+1)
    \end{equation}
    for some nonnegative integers $a$ and $b$.
    We wish to prove that $b=1$ and $a=0$.
    This will prove that $d(p^\ell+1)=p^{k}+1$ and therefore
    that $p^\ell+1$ has to divide $p^{k}+1$, namely that $d=\frac{p^k+1}{p^\ell +1}$.

    By Theorem \ref{th: basis RR Sp} we know a basis for the Riemann-Roch space  
    $\mathcal{L}(dpP_{k})$.
Since $u$ has a pole of order $dp$ at $P_{k}$, we may express $u$ in terms of the basis
    as $u=\sum A_{ij} x_k^iy_k^j$, where the sum is over 
    all $i$ and $j$ such that $0\leq j \leq p-1$ and 
    $pi+(p^k+   1)j\leq dp$. 
      
It follows from the last inequality that we must have $j=0$ in every term, since $j>0$ would imply $d\ge \frac{p^k+1}{p}$
and this contradicts Lemmas \ref{rh_ineq} and \ref{kl_ineq} together.
Therefore $u$ is a function of $x_k$ only, and $K(u)$ is a subfield of the 
rational function field $K(x_k)$.
The only term in $u$ with the maximal pole order $dp$ is $x_k^d$, the $i=d$ term,
so  $\sum A_{ij} x_k^iy_k^j$ is equal to $g(x_k)$ where $g(x)$ is a 
monic polynomial of degree $d$.

Since $v\in \mathcal{L}(d(p^\ell +1)P_{k})$, by the same argument we 
may express $v$ in terms of the basis
    $v=\sum A_{\alpha \beta } x_k^\alpha y_k^\beta$, where the sum is over 
    all $\alpha$ and $\beta$ such that
    $0\leq \beta \leq p-1$ and $p\alpha+(p^k+   1)\beta\leq d(p^\ell +1)$.
    By construction, equality holds when $\alpha =a$ and $\beta =b$,
     and  this is the only term in this 
     sum with the maximal pole order $d(p^\ell +1)$ because each element in the basis
     has a different pole order.
We note that each term can only have $\beta=0$ or $\beta=1$, because
$\beta >1$ implies $d\ge \frac{2(p^k+1)}{(p^\ell +1)}$, which contradicts 
Lemmas \ref{rh_ineq} and \ref{kl_ineq_2}.
(In particular,  either $b=0$ or $b=1$.)
This implies that we can write $v=y_kh_1(x_k)+h(x_k)$ for some polynomials $h_1, h$.
The exponents in $h_1(x_k)$ are the values of $\alpha$ such that
$p\alpha+(p^k+   1)\leq d(p^\ell +1)$, i.e., 
corresponding to terms where $\beta =1$.
The exponents in $h(x_k)$ are the values of $\alpha$ such that
$p\alpha\leq d(p^\ell +1)$, i.e., 
corresponding to terms where $\beta =0$. 

There are two cases to consider, when $b=0$ and $b=1$. 
Note that when $b=0$ the degree of $h(x_k)$ is $a$, and when $b=1$ 
the degree of $h_1(x_k)$ is $a$. 

Assume $b$ is either 0 or 1 for now.
Substituting $u=g(x_k)$ 
and $v=y_kh_1(x_k)+h(x_k)$  into the equation
$v^p+c v=u^{p^\ell+1}$
gives
$$
y_k^p h_1(x_k)^p +h(x_k)^p+c y_k h_1(x_k) +c h(x_k)=
g(x_k)^{p^\ell +1}
$$
in the function field.
We replace $y_k^p$ by $-c y_k+x_k^{p^k+1}$. After rearranging we get
$$-c y_k \biggl(h_1(x_k)^p - h_1(x_k)\biggr)= g(x_k)^{p^\ell +1}-c h(x_k)-h(x_k)^p-x_k^{p^k+1}h_1(x_k)^p.$$
If both sides are not 0 then we obtain $y_k \in K(x_k)$ which is a contradiction. 
Therefore both sides are 0, so
$h_1(x_k)^p = h_1(x_k)$, which implies $h_1(x) \in \F_p$.
This shows that $h_1(x_k)$ is a polynomial of degree 0.

Now we assume that $b=1$. In this case we know that $\deg (h_1) =a$. 
But we just proved that $\deg (h_1) =0$.
Therefore $a=0$, and the 
proof is complete in the $b=1$ case.

It remains to consider the case $b=0$.
Assuming $b=0$, equation \eqref{ab_eqn} becomes
    $ap=d(p^\ell+1)$.
This implies that $p|d$, which contradicts the hypothesis.
\end{proof}

\section{About Galois morphisms}\label{sec: Galois morphisms}

From now on we mostly restrict ourselves to the case when $c=1$, this is $B_k: y^p+y=x^{p^k+1}$.
In this section, we consider the case in which the morphism $\phi~\colon~B_k\rightarrow~B_{\ell} $ is Galois. This is equivalent to saying that $B_{\ell}=B_k/G$, where $G$ is a subgroup of the automorphism group $\Aut(B_k)$ of $B_k$  (the equivalence follows from Lemma 11.36 of \cite{HKT}).

The automorphism group of $B_k$, $k>1$, is described in the following lemma.
It can be found in \cite{BoniniMontanucciZini}[Lemma 2.3] and \cite{HKT}[Lemma 12.9] but we write here for clarity:

\begin{theorem}[\cite{HKT} Theorems 12.1 and 12.9]\label{theo12.9}
Let $B_k$ be the curve $y^p+y=x^{p^k+1}$.
Let $\Aut(B_k)$ denote the $K$-automorphism group of $B_k$ and 
let $P_k$ be the place arising from the unique branch centered at the unique point at infinity of $B_k$. 
Then $\Aut(B_k)$ fixes the point $P_k$,  and 
$\Aut(B_k)=G_{P_k}=G^{(1)}_{P_k} \rtimes H$ with 
$p \nmid |H|$. Moreover 
    \begin{enumerate}[(i)]
        \item the cyclic group $H$ is generated by the map $\alpha$ with
        \[
        \alpha (x,y)=(\gamma x,\gamma^{p^k+1}y);
        \]
        where $\gamma$ is a $(p^k+1)(p-1)$-th root of the unity. In particular
        $|H| = (p^k+1)(p - 1)$.
        \item The elementary abelian $p$-group $G^{(1)}_{P_k}$ has order $p^{2k+1}$, and its elements are the automorphisms $\beta_{d,e}$ defined by
        \[
        \beta_{d,e}(x,y)=(x+d,y+e+d^{p^k}x-d^{p^{k+1}}x^{p}+\cdots +(-1)^{k-1}d^{p^{2k-1}}x^{p^{k-1}}).
        \]
        where $d^{p^{2k}}+(-1)^kd=0$ and $e^p+e=d^{p+1}$.
    \end{enumerate}
\end{theorem}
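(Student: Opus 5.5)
The statement is the structure theorem for $\Aut(B_k)$, quoted from \cite{HKT}. I would prove it by first exhibiting the claimed automorphisms explicitly, then showing every automorphism fixes $P_k$, and finally identifying the stabiliser of $P_k$ by Riemann--Roch.

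\textbf{Step 1: the maps are automorphisms.} For $\alpha$, substitute into $y^p+y=x^{p^k+1}$. Since $\gamma^{(p^k+1)(p-1)}=1$, we have $(\gamma^{p^k+1})^p=\gamma^{p^k+1}$, so the equation is preserved. Moreover $\alpha$ has order exactly $(p^k+1)(p-1)$, which is prime to $p$.

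For $\beta_{d,e}$, write $y'=y+e+L(x)$ with $L(x)=\sum_{i=0}^{k-1}(-1)^i d^{p^{k+i}}x^{p^i}$. We need
\[
L(x)^p+L(x)=(x+d)^{p^k+1}-x^{p^k+1}-(e^p+e).
\]
The right side equals $d\,x^{p^k}+d^{p^k}x+d^{p^k+1}-e^p-e$. The alternating sum $L^p+L$ telescopes to $d^{p^k}x+(-1)^{k-1}d^{p^{2k}}x^{p^k}$. Matching coefficients then uses exactly $d^{p^{2k}}=(-1)^{k+1}d$ and $e^p+e=d^{p+1}$; the exponent convention $d^{p^k+1}$ versus $d^{p+1}$ needs care here. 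The count is $p^{2k}$ choices of $d$ (a separable additive polynomial) times $p$ choices of $e$, giving $p^{2k+1}$. Composition shows the $\beta_{d,e}$ form a group, and since $p$ is odd every nontrivial element has order $p$. To see the group is abelian, one checks that the commutator is $\beta_{0,e'}$ with $e'$ determined by a symmetric bilinear form, which vanishes.

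\textbf{Step 2: every automorphism fixes $P_k$.} This is the main obstacle. I would use the genus bounds. The $p$-group $S$ above has order $p^{2k+1}$, which is larger than the Nakajima/Stichtenoth bound $\frac{4p}{(p-1)^2}\mathfrak g^2$ fails to exclude, but it exceeds what is possible for a $p$-subgroup not fixing a point unless the curve is of Hermitian-type exceptions. I would instead invoke Stichtenoth's theorem (HKT Thm 11.127/11.78): if $|G_P^{(1)}|>\frac{2p}{p-1}\mathfrak g$ then either $P$ is the unique fixed point of $\Aut$ or the curve is in the known list (Hermitian, etc.). Here $p^{2k+1}>\frac{2p}{p-1}\cdot\frac{p^k(p-1)}{2}=p^{k+1}$, and $B_k$ is not Hermitian for $k\ge1$ unless the curve's genus matches $q(q-1)/2$ with an equation of Hermitian form. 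I would exclude this by a genus comparison, using that the Hermitian case requires $p^k=p$. Hence $\Aut(B_k)=G_{P_k}$.

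\textbf{Step 3: the stabiliser.} By general theory, $G_{P_k}=G_{P_k}^{(1)}\rtimes H$ with $H$ cyclic of order prime to $p$. Any $\sigma\in G_{P_k}$ preserves $\mathcal L(pP_k)=\langle 1,x\rangle$ and $\mathcal L((p^k+1)P_k)$, using \Cref{th: basis RR Sp}. So $\sigma(x)=ax+b$ and $\sigma(y)=\lambda y+f(x)$ with $\deg f\le p^{k-1}$ (pole order $<p^k+1$). Substituting into the curve equation and comparing coefficients gives $\lambda^p=\lambda=a^{p^k+1}$, and $f^p+f$ equals the correction term. Solving this forces $f$ to have the shape $e+L(x)$ with $b=d$. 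This shows $|G_{P_k}|=(p-1)(p^k+1)p^{2k+1}$, with $a$ ranging over the $(p^k+1)(p-1)$-th roots of unity. Hence $G^{(1)}$ consists exactly of the $\beta_{d,e}$ (the case $a=1$) and $H=\langle\alpha\rangle$.
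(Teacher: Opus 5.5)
The paper gives no proof of this statement (it is quoted from HKT), so your reconstruction has to stand on its own. It breaks at the commutativity claim in Step~1, and that step cannot be repaired because the claim is false.

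Write $L_d(X)=\sum_{i=0}^{k-1}(-1)^i d^{p^{k+i}}X^{p^i}$; this is additive in both $d$ and $X$. Composing gives $\beta_{d,e}\beta_{d',e'}=\beta_{d+d',\,e+e'+L_d(d')}$. So $\beta_{d,e}\beta_{d',e'}$ and $\beta_{d',e'}\beta_{d,e}$ differ by $\beta_{0,\delta}$ with $\delta=L_d(d')-L_{d'}(d)$. This pairing is alternating, not symmetric, and it is not zero:

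\begin{itemize}
\item For fixed $d\neq 0$, the map $X\mapsto L_d(X)-L_X(d)$ is a nonzero polynomial of degree $p^{2k-1}$, with top term $(-1)^k d^{p^{k-1}}X^{p^{2k-1}}$.
\item It therefore has at most $p^{2k-1}$ roots, while $d'$ ranges over the $p^{2k}$ roots of $X^{p^{2k}}+(-1)^kX$.
\end{itemize}

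Hence $G^{(1)}_{P_k}$ is non-abelian. It is extraspecial of order $p^{2k+1}$ and exponent $p$, with centre $E=\{\beta_{0,e}\}$, as for the curves $y^p-y=xR(x)$ of van der Geer and van der Vlugt. Hasse--Arf confirms this. The lower ramification jumps at $P_k$ are $1$ and $p^k+1$, with $G_2=\dots=G_{p^k+1}=E$, so the second upper jump is $1+p^{-k}\notin\Z$.

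What you can actually prove is order $p^{2k+1}$ and exponent $p$, from $\beta_{d,e}^n=\beta_{nd,\,ne+\binom n2 L_d(d)}$ and $p$ odd. The words ``elementary abelian'' in the quoted statement are wrong. Note also that the paper's proof of Theorem~\ref{first_groups}(2) relies on them.

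There are three smaller points.

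\emph{The condition on $e$.} The constant term of $(x+d)^{p^k+1}$ is $d^{p^k+1}$, so your computation in Steps~1 and~3 yields $e^p+e=d^{p^k+1}$. The statement's $d^{p+1}$ is right only for $k=1$. You should say so, rather than claim the coefficient matching ``uses exactly'' it.

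\emph{Step~2.} Here $p^{2k+1}$ equals $\frac{4p}{(p-1)^2}\mathfrak g^2$ exactly (these curves attain Stichtenoth's bound); it is not larger. The route through $|G^{(1)}_{P_k}|>\frac{2p}{p-1}\mathfrak g=p^{k+1}$ and the classification of exceptions is fine. However, you must exclude every exceptional curve in that list, not only the Hermitian one. For odd $p$ these include the Hermitian curve and, when $p=3$, the Ree curve. The Hermitian exclusion also needs $k\ge 2$: $B_1$ is the Hermitian curve and $\Aut(B_1)\cong PGU(3,p)$ does not fix $P_1$. So the statement must be read with $k>1$, as the paper's surrounding text indicates.

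\emph{Step~3.} This step is sound. The coefficient recursion gives $\sigma(y)=\lambda y+e+L_b(ax)$, where $\lambda=a^{p^k+1}\in\F_p^*$, $b^{p^{2k}}+(-1)^kb=0$ and $e^p+e=b^{p^k+1}$. This yields $|G_{P_k}|=(p^k+1)(p-1)p^{2k+1}$.
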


The Artin-Schreier curve $B_{k,-1}: y^p-y=x^{p^k+1}$ has a group of 
automorphisms known as the Artin-Schreier translations. These
are the automorphisms $(x,y)\mapsto (x,y+e)$ where $e\in \F_p$.
The quotient curve by this group is well known to be rational, the
projective line. 
With the curve $B_{k,1}: y^p+y=x^{p^k+1}$, when we refer to the Artin-Schreier translations we mean the group of automorphisms $(x,y)\mapsto (x,y+e)$ where $e^p+e=0$. These are the image of the Artin-Schreier translations for $B_{k,-1}$
under the isomorphism in \Cref{iso between Bk + and -}.
In the notation of \Cref{theo12.9} they are all the automorphisms $\beta_{0,e}$.
We shall denote this subgroup by $E$, and we will use the fact that
$B_k/E$ is rational.

We are going  to try to see if $B_\ell\cong B_k/G$.

\begin{theorem}\label{first_groups}
Let $G$ be a subgroup of $\Aut(B_k)$ of order $d$.
\begin{enumerate}
\item If $G\le H$ then $B_k/G$ cannot be isomorphic to $B_\ell$ unless
$d=\frac{p^k+1}{p^\ell +1}$.
\item If $G\le G^{(1)}_{P_k}$ then $B_k/G$ cannot be isomorphic to $B_\ell$.
\item If $G\ge G^{(1)}_{P_k}$ then $B_k/G$ cannot be isomorphic to $B_\ell$.
\end{enumerate}
\end{theorem}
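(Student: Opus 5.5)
Part (1) reduces to \Cref{thm: main}(b); parts (2) and (3) follow from genus computations, using that $E$ and $G^{(1)}_{P_k}$ have rational quotients.

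\textbf{Part (1).} Assume $G\le H$ and $B_k/G\cong B_\ell$, and let $\phi\colon B_k\to B_k/G\cong B_\ell$ be the quotient map. Since $H$ is cyclic of order $(p^k+1)(p-1)$, which is prime to $p$, the degree $d=|G|$ is prime to $p$. By \Cref{theo12.9}, every element of $\Aut(B_k)$ fixes $P_k$, so the stabiliser of $P_k$ in $G$ is all of $G$. Hence $\phi$ is totally ramified at $P_k$ with $e_\phi(P_k)=d$.

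It remains to see that $\phi(P_k)=P_\ell$. Let $\psi\colon B_k/G\to B_\ell$ be the isomorphism. The image $\psi(\phi(P_k))$ is a point of $B_\ell$, and composing $\psi$ with an automorphism of $B_\ell$ does not move $P_\ell$, since $P_\ell$ is fixed by $\Aut(B_\ell)$. So I argue directly: $\phi(P_k)$ is the unique point of $B_k/G$ fixed by $\Aut(B_k/G)$, or at least the unique totally ramified image point that is singled out. Since $P_\ell$ is the unique point of $B_\ell$ fixed by its full automorphism group, I expect to identify $\psi(\phi(P_k))=P_\ell$ through Weierstrass semigroups. Concretely, $P_k$ is the unique point with semigroup $\langle p,p^k+1\rangle$. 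The image point $Q=\phi(P_k)$ has pole numbers equal to the $G$-invariant pole numbers at $P_k$ divided by $d$. Alternatively, one can take functions of $K(x,y)^G$ whose only pole is at $P_k$.

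This identification step is where I expect the main obstacle, because $B_\ell$ has $p$-rank $0$ and is not hyperelliptic, so a uniqueness argument for $P_\ell$ is needed. I would first try the claim that $\Aut(B_\ell)$ acts on the affine points $B_\ell\setminus\{P_\ell\}$ with large orbits, whereas $\phi(P_k)$ is fixed by the induced action of $N_{\Aut(B_k)}(G)/G$. That group contains the image of $H/G$, which has order at least $p-1$, and it also contains the image of the Artin--Schreier translations $E$, because $E$ is central enough to normalise $G$. A point of $B_\ell$ fixed by a subgroup of order $p$ must be $P_\ell$, since every $p$-element of $\Aut(B_\ell)$ fixes only $P_\ell$. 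Once $\phi(P_k)=P_\ell$ is established, \Cref{thm: main}(b) gives $d=\frac{p^k+1}{p^\ell+1}$.

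\textbf{Parts (2) and (3).} Here I would use genus. If $G\ge G^{(1)}_{P_k}$, then $G\supseteq E$, so $B_k/G$ is a quotient of the rational curve $B_k/E$. By Lüroth it is rational, and therefore it is not $B_\ell$, which has positive genus.

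If $G\le G^{(1)}_{P_k}$, then $d=p^s$ and $\phi$ is totally ramified only at $P_k$; every affine point has trivial stabiliser, because the $\beta_{d,e}$ act on the affine points without fixed points. Riemann--Hurwitz then gives
\[
2\mathfrak g_k-2=p^s(2\mathfrak g_\ell-2)+\deg\mathrm{Diff}_{P_k}.
\]
Using Hilbert's different formula with the lower ramification groups at $P_k$, one computes $\mathfrak g(B_k/G)$. I expect it to have the shape $\frac{p-1}{2}\cdot|$something$|$ with a $p$-power structure incompatible with $p^\ell(p-1)/2$ unless $G\cap E$ is trivial.

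In the remaining subcase $G\cap E=1$, the group $G$ commutes with $E$, since $G^{(1)}_{P_k}$ is abelian. Then $E$ descends to a subgroup of order $p$ in $\Aut(B_k/G)$, and the quotient $(B_k/G)/E=(B_k/E)/G$ is rational. The $p$-rank of $B_k/G$ is $0$. If $B_k/G\cong B_\ell$, the order-$p$ group $EG/G$ lies in the center $Z(G^{(1)}_{P_\ell})$, which I expect to equal $E_\ell$. Comparing the orders of these Sylow $p$-subgroups, and using the fact that $N(G)/G$ acts, should give a contradiction. If this does not close the case, I would fall back on showing that the upper bound $d\le\frac{p^{k+1}-p^k-2}{p^{\ell+1}-p^\ell-2}$ from \Cref{rh_ineq}, together with an exact different computation, forces $p^s(p^\ell-\dots)$ into an impossible congruence modulo $p$.
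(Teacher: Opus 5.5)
Part (3) is fine and is the same argument as the paper's. The genuine gap is in part (2), in the subcase $G\cap E=1$, which you leave at ``should give a contradiction''. Neither the genus nor your fallback can supply that contradiction. If $|G|=p^r$, then \eqref{eq: genus quotient} with $m=t=1$ gives $\mathfrak g(B_k/G)=p^{k-r}(p-1)/2$, which equals $\mathfrak g(B_\ell)$ for $r=k-\ell$. Moreover $d=p^{k-\ell}$ satisfies \Cref{rh_ineq}, so there is no congruence obstruction either.

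What is missing is a \emph{lower} bound on $d$. The paper gets it from $N_{\Aut(B_k)}(G)/G\le\Aut(B_\ell)$. Taking $G^{(1)}_{P_k}$ abelian, as stated in \Cref{theo12.9}, it normalizes $G$, so a $p$-group of order $p^{2k+1}/d$ embeds in a Sylow $p$-subgroup of $\Aut(B_\ell)$, which has order $p^{2\ell+1}$. Hence $d\ge p^{2(k-\ell)}$, and Riemann--Hurwitz then forces a negative different.

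Be aware that this step really needs commutativity, and the formula in \Cref{theo12.9} does not support it (consistent with your own guess that the centre is $E_\ell$). Let $L_d(x)=d^{p^k}x-d^{p^{k+1}}x^p+\cdots$ be the polynomial appearing in $\beta_{d,e}$. The two composites of $\beta_{d,e}$ and $\beta_{d',e'}$ differ by the translation $y\mapsto y+L_d(d')-L_{d'}(d)$. For $d\neq0$ this is a nonzero polynomial in $d'$ of degree $p^{2k-1}<p^{2k}$, so it is nonzero for some admissible $d'$. Thus $G^{(1)}_{P_k}$ is extraspecial with centre $E$. When $G\cap E=1$, an element of $G^{(1)}_{P_k}$ normalizes $G$ only if it centralizes it. For $|G|=p^{k-\ell}$ that centralizer has order $p^{k+\ell+1}$, i.e.\ exactly $p^{2\ell+1}$ modulo $G$. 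So the Sylow count you propose yields no contradiction, and some further input (e.g.\ an explicit description of the quotient curve) is required.

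In part (1) you are right that \Cref{thm: main}(b) needs $\phi(P_k)=P_\ell$; the paper does not spell this out. However, your argument for it fails, because $E$ need not normalize $G$. Write $G=\langle\alpha^s\rangle$ with $\alpha^s(x,y)=(\lambda x,\mu y)$. Then $\beta_{0,e}\circ\alpha^s\circ\beta_{0,e}^{-1}\colon(x,y)\mapsto(\lambda x,\mu y+(1-\mu)e)$, which is not in $H$ when $\mu\neq1$ and $e\neq0$. More precisely, an element of $G^{(1)}_{P_k}$ normalizes $G$ only if it centralizes it, and $\alpha^s\beta_{d,e}\alpha^{-s}=\beta_{\lambda d,\mu e}$. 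So for $\mu\neq1$ one gets $N(G)=H$, and $N(G)/G=H/G$ has no element of order $p$.

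This case genuinely occurs with matching genus. For $p=3$, $k=9$, $\ell=2$, the group $G=\langle\alpha^{19}\rangle$ has order $2072$, $\mu$ has order $2$, and \eqref{eq: genus quotient} gives genus $9=\mathfrak g(B_2)$.

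The semigroup idea you mention in passing does close the step:
\begin{itemize}
\item $x^d$ is $G$-invariant with pole divisor $dpP_k$, so it descends to a function $f$ of degree $p$ on $B_k/G$ with pole divisor $p\,\phi(P_k)$.
\item On $B_\ell$ with $\ell\ge2$ we have $\mathfrak g(B_\ell)>(p-1)^2$. By Castelnuovo--Severi and the primality of $p$, this forces $K(f)=K(x_\ell)$.
\item Hence the pole divisor of $f$ is a fibre of $x_\ell$, and $\phi(P_k)$ is a point where $x_\ell$ is totally ramified, namely $P_\ell$.
\end{itemize}
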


\begin{proof}
 
(1)
Since $\Aut(B_k)$ fixes $P_{\infty}$, by \cite[Exercise 3.16]{StichBook}, the cover $B_k\mapsto B_k/G$ is totally ramified at $P_{\infty}$. The claim is now a consequence of \Cref{thm: main}.

(2)
Assume by contradiction that $B_{\ell}=B_k/G$ for some $G\leq G_{P_{\infty}}^{(1)}$.
Then, by \cite[Exercise 3-4]{milne2022fields}, 
\[
N_{\Aut(B_k)}(G)/G\leq \Aut(B_{\ell}).
\]
Since $ G_{P_{\infty}}^{(1)}$ is abelian of order $p^{2k+1}$, and $|G_{P_\ell}^{(1)}|=p^{2\ell+1}$, if $G$ has order $d$, it must be
\[
\frac{p^{2k+1}}{d}\leq p^{2\ell+1};
\]
which implies $d\geq p^{2(k-\ell)}$.

The Riemann-Hurwitz formula reads
\[
p^k(p-1)-2=d(p^{\ell}(p-1)-2)+\Delta.
\]
Therefore, since $d\geq p^{2(k-\ell)}$,
\[
p^k(p-1)-2\geq(p^{\ell}(p-1)-2)p^{2(k-\ell)}+\Delta.
\]
This can be read as
\[
\Delta\leq (p^{k-\ell}-1)(2(p^{k-\ell}+1)-(p-1)p^k).
\]
For any odd prime $p$ and integers $k>\ell>0$, this implies $\Delta<0$, which is not possible.

(3)
Such a subgroup $G$ contains the subgroup $E$ of Artin-Schreier translations, forcing the quotient curve to be rational.
\end{proof}

\section{Genus of the quotient group}\label{Section5}

Let $G\leq \Aut(B_k)=A\rtimes H$. 
Note that, since $\Aut(B_k)$ fixes $P_k$, $G$ does as well, and is of the form $ G=U\rtimes C $ with $ U\le A\coloneqq G^{(1)}_{P_k}$ and $ C\le H $ cyclic, following the notations of \Cref{theo12.9}, see \cite[Theorem 11.49]{HKT} or \cite[ Chapter IV, Corollary 4]{StichBook}.

We denote by $E$ the automorphism subgroup of Artin-Schreier translations, which is contained in $A$.
Since the quotient $B_k/E$ is rational, we moreover assume that $G\cap E=\{\textup{Id}_G\}$.
Let $N=(p^k+1)(p-1)$, $m=|C|$, and $r$ such that $|U|=p^r$. Recall that $|A|=p^{2k+1}$ and $|H|=N$. Moreover, let us fix $s\mid N$ such that $C=\langle\alpha^s\rangle$, and define
\[
\lambda=\gamma^s,\qquad
\mu=\gamma^{s(p^k+1)},
\]
so that $ \alpha^s(x,y)=(\lambda x,\mu y) $. The order $t$ of $ \mu $ is
\[
t:=ord(\mu)=\frac{p-1}{\gcd(s,p-1)}.
\]

In the following, we will compute the genus of $B_k/G$ as a function of $N,m,t$ and $r$.
The Riemann-Hurwitz formula reads
\[
2\mathfrak{g}(B_k)-2=|G|(2\mathfrak{g}(B_k/G)-2)+\Delta;
\]
where $\Delta$ is the different of the quotient map. The different can be computed via the Hilbert Different Formula, which tells us $\Delta=\sum_{P\in B_k(K) }d_P$ where $d_P=\sum_{i\geq 0}|G_P^{(i)}|-1$, see \cite{HKT}[Theorem 11.70].
Define $\Delta=\Delta_1+d_{P_{k}}$, with 
\[
\Delta_1=\sum_{\substack{P\in B_k(K), P\neq P_{k}}}d_P.
\]
To compute $\Delta_1$, we first describe the short orbits of $C=\langle \alpha^s\rangle$ on affine points, where $\alpha^s(x,y)=(\lambda x,\mu y)$ .
Let $S=\{P\in B_k(K) |P=(0,y)\}$, and note that $S$ consists of exactly $ p $ points:
\[
(0,0),\quad \text{and }(0,y\xi)\ \text{with }\xi^{p-1}=-1\text{ and }y^{p-1}=1.
\]

\begin{itemize}
\item If $P\notin S$, namely $P=(x,y)$ with $ x\neq 0 $, then $ \lambda^n x=x $ with $ 1\le n<m $ has no solution, so no such point lies in a short $ C $-orbit.
\item  The point $ (0,0)\in S $ is fixed by each element of $ C $, namely it forms a short $C$-orbit of size $1$.
\item If $P\in S$, and $P=(0,y\xi) $ where $y\in\mu_{p-1} $, the action of $C$ is 
$ (0,y\xi)\mapsto(0,(\mu y)\xi) $. Therefore, for any point $P\in S\setminus \{(0,0)\}$, the stabilizer $C_P$ has size $ m/t $. By the orbit-stabilizer theorem, the  $ p-1 $ points $ S\setminus \{(0,0)\}$ split into $ (p-1)/t $ $C$-orbits, each of length $ t $.
\end{itemize}

Note that for any $u\in U$ and any point $P\in S$, the image $u(P)$ of $P$ under $u$ does not belong to $S$.
Moreover, since any $p$-automorphism $\beta_{d,e}$ only fixes the point at infinity $P_{k}$, $U$ acts regularly on the affine points.

As a consequence, the short $G$-orbits on $B_k(K)\setminus \{P_{k}\}$ are precisely the $(p-1)/t$ sets $U(\mathcal{O}_1),\dots, U(\mathcal{O}_{(p-1)/t})$, where $\mathcal{O}_1,\dots, \mathcal{O}_{(p-1)/t}$ are the short $C$-orbits on $B_k(K)\setminus \{P_{k}\}$. The only special case is the point $(0,0)$ which has a short orbit in $G$ of size $p^r$.

Summing up,  by \cite[Theorem 11.57]{HKT} we have
\[
\Delta_1=\frac{p-1}{t}(mp^r-p^rt)+mp^r-p^r. 
\]

We now compute $d_{P_k}$.
Since $\Aut(B_k)$ fixes $P_{k}$, in particular $G$ fixes it, namely $G_P^{(0)}=G$. Moreover, $G_{P_{k}}^{(1)}=U$, see \cite[Theorem 11.64]{HKT}. We claim that  $G_{P_{k}}^{(i)}=\{\textup{Id}_G\}$ for any $i\geq 2$.

Since  $v_{P_{k}}(x_k)=-p$ and $v_{P_{k}}(y_k)=-(p^{k}+1)$, it follows that the element $t$ defined as $t=x^{p^{k-1}}/y$, is such that
\[
v_{P_k}(x^{p^{k-1}}/y)= p^{k-1}v_{P_k}(x)-v_{P_k}(y)=1;
\]
namely $t$ is a uniformising element at $P_{k}$.
The claim is therefore proven if we show that
\[
v_{P_k}(\beta_{d,0}(t)-t)=2\quad\text{for all }1\neq\beta_{d,0}\in U.
\]
Recall that for any $d$, $\beta_{d,0}(x,y)=(x+d,y+L_d(x))$, which implies 
\[
\beta_{d,0}(t)=\frac{x^{p^{k-1}}+d^{p^{k-1}}}{y+L_d(x)}=\frac{x^{p^{k-1}}(1+d^{p^{k-1}}/x^{p^{k-1}})}{y(1+L_d(x)/y)}=t\frac{1+\delta}{1+\epsilon};
\]
where $\delta$ and $\epsilon$ are $\delta = (d/x)^{p^{k-1}}$ and $\epsilon = L_d(x)/y$.
Note that $v_{P_{k}}(\delta)=p^k$ and $v_{P_{k}}(\epsilon)=-p^k-(-(p^k+1))=1$ because the degree of $L_d$ is $p^{k-1}$.
Therefore,
\[
\beta_{d,0}(t)-t=t\frac{\delta-\epsilon}{1+\epsilon}.
\]
Taking the valuations, we get
\[
v_{P_{k}}\left(t\frac{\delta-\epsilon}{1+\epsilon}\right)=v_{P_{k}}(t)+v_{P_{k}}\left(\frac{\delta-\epsilon}{1+\epsilon}\right)=1+v_{P_{k}}(\delta-\epsilon)-v_{P_{k}}(1+\epsilon)=1+1-0=2;
\]
where
$v_{P_{k}}(\delta-\epsilon)=\mathrm{min}(v_{P_{k}}(\delta),v_{P_{k}}(\epsilon))=1$ and 
$v_{P_{k}}(1+\epsilon)=\mathrm{min}(v_{P_{k}}(1),v_{P_{k}}(\epsilon))=0$.
So, $G_{P_{k}}^{(i)}=\{\textup{Id}_G\}$ for any $i\geq 2$.
And we obtain
$$d_{P_k}=mp^r-1+p^r-1.$$

From the  Riemann--Hurwitz formula, if we use $\Delta=\Delta_1+d_{P_{k}}$, we get that the genus of the quotient curve is
\begin{equation}\label{eq: genus quotient}
    \mathfrak{g}(B_k/G) = \frac{(p-1)(t(p^r+p^k)-p^rm)}{2p^rmt}.
\end{equation}

\section{Conditions for $B_{\ell}$ to be a quotient of $B_k$}\label{sec: Cond quot}

Since we have a formula for the genus of the quotient, 
we can derive a condition for when $B_\ell$ can be obtained as a quotient of $B_k$ by a subgroup $G$ of order $p^rm$. Starting from \Cref{eq: genus quotient} and imposing that $\mathfrak{g}(B_k/G)$ is equal to $ \mathfrak{g}(B_\ell)= p^\ell(p-1)/2$, we get the following relation
between the parameters
\begin{equation}\label{eq: B_kB_l}
    m(p^{\ell}t+1)= t(p^{k-r}+1).
\end{equation}

So, we need to study when this condition is satisfied, subject to our constraints that arise from the previous sections.
If $k$ does not divide $\ell$ then we know that there is no morphism
$B_k \longrightarrow B_\ell$.
We add the constraint that $p^\ell +1$ does not divide $p^k+1$, and we expect
no solutions. 
We will use methods from elementary number theory in
this section to show that this equation  has no solutions in certain cases.

Next we  state a well known lemma that we will use.

\begin{lemma}\label{gcd_p_plus}
$$\gcd(p^a+1,p^b+1)=
\begin{cases}
    p^{\gcd(a, b)}+1 &\text{ if } \frac{a}{\gcd(a, b)} \text{ and }  
    \frac{b}{\gcd(a, b)} \text{ are both odd}, \\
    2  &\text{ otherwise.}
\end{cases}
$$
\end{lemma}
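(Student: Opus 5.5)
The plan is to prove \Cref{gcd_p_plus} by reducing to the classical identity $\gcd(p^x-1,p^y-1)=p^{\gcd(x,y)}-1$, together with a parity analysis. Put $g=\gcd(a,b)$ and $D=\gcd(p^a+1,p^b+1)$. Since $p^a+1 \mid p^{2a}-1$ and $p^b+1\mid p^{2b}-1$, we get
\[
D \mid \gcd(p^{2a}-1,p^{2b}-1)=p^{\gcd(2a,2b)}-1=p^{2g}-1=(p^g-1)(p^g+1).
\]
The classical identity itself I will take as known; it follows from the Euclidean algorithm applied to exponents, using $p^x-1 \equiv p^{x-y}-1 \pmod{p^y-1}$.

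\emph{Case 1: $a/g$ and $b/g$ are both odd.} Then $p^g+1$ divides both $p^a+1$ and $p^b+1$, by the alternating-sum factorization used in \Cref{lem:divisibility cond}. So $p^g+1\mid D$. For the reverse inequality, reduce modulo $p^g+1$'s partner $p^g-1$. Since $p^g\equiv 1 \pmod{p^g-1}$, we have $p^a+1\equiv 2 \pmod{p^g-1}$. Hence $\gcd(D,p^g-1)\mid 2$.

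Combining this with $D\mid (p^g-1)(p^g+1)$, the quotient $D/(p^g+1)$ divides $p^g-1$ and also divides $\gcd(D,p^g-1)$, which is at most $2$. So $D$ equals $p^g+1$ or $2(p^g+1)$. Now write $p^a+1=(p^g+1)\sum_{i=0}^{a/g-1}(-1)^i p^{gi}$. The cofactor is a sum of $a/g$ odd terms (an odd number of them), so it is odd. Therefore $2(p^g+1)\nmid p^a+1$, and $D=p^g+1$.

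\emph{Case 2: at least one of $a/g$, $b/g$ is even.} Say $a/g$ is even, so $b/g$ is odd because $\gcd(a/g,b/g)=1$. Since $a/g$ is even, $p^{2g}-1 \mid p^a-1$, so $p^a\equiv 1 \pmod{p^g+1}$. Hence $p^a+1\equiv 2 \pmod{p^g+1}$, and likewise $p^a+1\equiv 2 \pmod{p^g-1}$. So $\gcd(D,p^g+1)\mid 2$ and $\gcd(D,p^g-1)\mid 2$.

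From $D\mid (p^g-1)(p^g+1)$ this gives $D\mid 4$. This is the one delicate point, and I would handle it by a direct $2$-adic argument. Since $a$ is even, $p^a\equiv 1 \pmod 4$ (as $p$ is odd), so $p^a+1\equiv 2\pmod 4$. Thus $4\nmid D$. Since both $p^a+1$ and $p^b+1$ are even, $D=2$.

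The main (mild) obstacle is ruling out the extra factor of $2$ in each case. This is done by the odd-cofactor argument in Case 1 and by $p^{\text{even}}+1\equiv 2 \pmod 4$ in Case 2. Throughout we use that $p$ is odd, which is the standing assumption of the paper.
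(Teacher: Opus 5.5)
Your proof is correct and complete. The paper does not prove this lemma at all; it is quoted as ``well known'' and used without argument, so there is no proof to compare against. Your argument supplies a self-contained one: you reduce to $\gcd(p^x-1,p^y-1)=p^{\gcd(x,y)}-1$ via $D\mid (p^g-1)(p^g+1)$, then do two parity checks (the odd alternating-sum cofactor in the first case, and $p^{\mathrm{even}}+1\equiv 2 \pmod 4$ in the second).

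The one step you state without justification is that $\gcd(D,p^g-1)\mid 2$, $\gcd(D,p^g+1)\mid 2$ and $D\mid (p^g-1)(p^g+1)$ together force $D\mid 4$. This is true, since $\gcd(D,XY)$ always divides $\gcd(D,X)\gcd(D,Y)$, as one checks prime by prime on valuations. You can also bypass it. The odd part of $D$ is coprime to both $p^g-1$ and $p^g+1$ yet divides their product, so it equals $1$. Your mod-$4$ observation then gives $\nu_2(D)\le 1$.

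You are also right to flag that oddness of $p$ is essential. For $p=2$ the ``otherwise'' value would be $1$, not $2$; for instance $\gcd(5,3)=1$.
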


This result allows us to separate the study of \Cref{eq: B_kB_l} in two cases, this is, when $\gcd(p^{k-r}+1, p^k+1)=2$ and when $\gcd(p^{k-r}+1, p^k+1)>2$. 

We will also use the following.

\begin{lemma}[Lemma 2.2 from \cite{Ramzy2022APT}]\label{apdp}
    Assume $A, P$ are integers with $1\leq A \leq P$. If there is an integer $D>0$ such that 
    \[
    \frac{AP+1}{DP+1} \in \Z,
    \]
    then we must have $D=A$.
\end{lemma}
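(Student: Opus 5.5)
We need to prove the lemma: if $1\le A\le P$ and $D>0$ with $(AP+1)/(DP+1)=q\in\Z$, then $D=A$.

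The plan is to write $AP+1 = q(DP+1)$ with $q$ a positive integer and compare both sides modulo $P$ and in size. First, since $AP+1\geq DP+1$ is forced by $q\ge 1$, we get $D\le A\le P$. Reducing the identity modulo $P$ gives $1\equiv q \pmod P$, so $q = 1 + jP$ for some integer $j\ge 0$. If $j=0$, then $q=1$ and $AP+1=DP+1$, whence $D=A$, which is the claim.

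It remains to rule out $j\ge 1$. Substituting $q=1+jP$ yields
\[
AP+1 = (1+jP)(DP+1) = DP+1 + jDP^2 + jP,
\]
so after cancelling $1$ and dividing by $P$ we obtain
\[
A = D + jDP + j.
\]
Since $D\ge 1$ and $j\ge 1$, the right-hand side is at least $1 + P + 1 = P+2 > P \ge A$, a contradiction. (The case $P=1$ is covered by the same inequality.) Hence $j=0$ and $D=A$.

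The only step needing care is the congruence $q\equiv 1 \pmod P$ together with the bound $A\le P$. This bound is exactly what prevents the quotient from being a larger number congruent to $1$ modulo $P$. I expect no real obstacle: the argument is a direct size comparison once the residue of $q$ is pinned down.
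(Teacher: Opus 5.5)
Your proof is correct and complete. Positivity of $AP+1$ and $DP+1$ gives $q\ge 1$. Reduction modulo $P$ forces $q=1+jP$ with $j\ge 0$. For $j\ge 1$, the identity $A=D+jDP+j\ge P+2$ contradicts $A\le P$, and this covers $P=1$ as well.

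The paper does not prove this lemma; it imports it as Lemma~2.2 of \cite{Ramzy2022APT}. So your argument supplies a self-contained justification rather than reproducing or departing from one in the text.

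For comparison, an equally short route avoids pinning down the residue of $q$. Since $DP+1$ divides $AP+1$, it also divides $A(DP+1)-D(AP+1)=A-D$. Your preliminary observation $D\le A$, which your own argument never uses, together with $A\le P$ gives $0\le A-D\le P-1<DP+1$, hence $A=D$. Both versions use $A\le P$ only to keep a nonnegative quantity below the relevant modulus.
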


From the definition of $t$ in \Cref{Section5} we know that $t|(p-1)$. Using this fact and taking $P=p^\ell$,  we have that the previous lemma gives a contradiction for $\ell \geq (k-r)/2$. Therefore, we can restrict ourselves to the case $\ell < (k-r)/2$.

We are going to use this upper bound for $\ell$ in the following result, in which we prove some conditions that $u=m/t$ satisfies.

\begin{lemma}\label{general_div}
Let $p$ be an odd prime. Assume
\begin{enumerate}
  \item $k,\ell,r,t,m\in\mathbb{N}$, 
  \item $1<\ell < k$,  $0<r<k$, and $\ell < \frac{k-r}{2}$,
  \item $\ell\mid k$ and $k/\ell$ is even,
  \item $t\mid (p-1)$,
  \item $m\mid (p^k+1)(p-1)$ and $m>1$,
  \item $m\bigl(p^\ell t+1\bigr) \;=\; t\bigl(p^{k-r}+1\bigr).$
\end{enumerate}
Let $u=m/t$. Then 
\begin{enumerate}
\item $u\in \mathbb{Z}$ and $u\ge 5$.
\item If we write $u = 2^e u_0$ with $u_0$ odd,
then $u_0$ is a common divisor of $p^{k-r}+1$ and $p^k+1$.
\item $u_0$ divides $p^r-1$.
\item $\gcd(u_0,t)=1$.
\item $u \ge p^{(k-r)/2}$
\end{enumerate}
\end{lemma}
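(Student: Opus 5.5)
The plan is to extract everything from hypothesis (6), written as
\[
m\bigl(p^\ell t+1\bigr)=t\bigl(p^{k-r}+1\bigr),
\]
together with congruences modulo $p^\ell$ and modulo small odd primes. For item (1), since $\gcd(t,p^\ell t+1)=1$, hypothesis (6) forces $t\mid m$. Hence $u=m/t$ is an integer and
\[
u\,(p^\ell t+1)=p^{k-r}+1 .
\]
Because $\ell<\frac{k-r}{2}<k-r$, reducing modulo $p^\ell$ gives $u\equiv 1 \pmod{p^\ell}$. From hypothesis (5), $m>1$, and $m=t$ would force $p^\ell t=p^{k-r}$, which is impossible since $t\mid p-1$ is coprime to $p$ and $\ell\neq k-r$. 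So $u>1$, and therefore $u\ge p^\ell+1\ge p^2+1\ge 10$, which gives $u\ge 5$.

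For items (2)--(4), note first that $u_0\mid u\mid p^{k-r}+1$. Next I would show $\gcd(u_0,p-1)=1$. If an odd prime $q$ divided both $u_0$ and $p-1$, then $p\equiv 1\pmod q$, so $p^{k-r}+1\equiv 2\pmod q$, contradicting $q\mid p^{k-r}+1$. Since $u_0\mid m\mid (p^k+1)(p-1)$ by hypothesis (5), coprimality with $p-1$ gives $u_0\mid p^k+1$, which is item (2). Item (4) is immediate from $t\mid p-1$. For item (3), use the identity
\[
p^k+1-p^r\bigl(p^{k-r}+1\bigr)=1-p^r .
\]
Since $u_0$ divides both terms on the left, it divides $p^r-1$.

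For item (5), which I expect to be the only step needing a real idea, I would exploit the symmetry of the factorization $p^{k-r}+1=u\,(p^\ell t+1)$, where both factors are $\equiv 1 \pmod{p^\ell}$; this is in the spirit of Lemma~\ref{apdp}. Write $u=Dp^\ell+1$ with $D\ge 1$. Expanding the product and subtracting $1$ gives
\[
Dt\,p^{2\ell}+(D+t)\,p^\ell=p^{k-r},\quad\text{that is,}\quad Dt\,p^\ell+D+t=p^{k-r-\ell}.
\]
Since $k-r-\ell>\ell$, this yields $D+t\equiv 0\pmod{p^\ell}$. As $t\le p-1<p^\ell$, we get $D\ge p^\ell-t$. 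Moreover $p^\ell\ge p^2>2(p-1)\ge 2t$, so $p^\ell-t>t$ and hence $D\ge t$. Then $u\ge tp^\ell+1$, so
\[
u^2\ge u\,(p^\ell t+1)=p^{k-r}+1>p^{k-r},
\]
which gives $u\ge p^{(k-r)/2}$. The main thing to double-check is that the hypotheses $\ell\ge 2$ and $\ell<(k-r)/2$ are exactly what make both congruence steps legitimate. Hypothesis (3) does not seem to be needed for these five items.
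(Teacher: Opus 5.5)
Your proof is correct. Items (2)--(4) follow essentially the paper's route. The paper passes through $u\mid (p^k+1)s$ with $p-1=ts$ and compares odd parts using $\gcd(p^k+1,p-1)=\gcd(p^{k-r}+1,p-1)=2$. You get $\gcd(u_0,p-1)=1$ directly from $u_0\mid p^{k-r}+1$ and then use $u_0\mid m$, which is the same idea stated more directly.

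The genuine differences are in (1) and (5).

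\begin{itemize}
\item For (1), the paper excludes $u=1,2,3,4$ one by one, reducing $p^{k-r}+1=u(p^\ell t+1)$ modulo $p$ (and modulo $9$ when $p=3$, $u=4$). Your observation that this equation forces $u\equiv 1\pmod{p^\ell}$ handles all of these at once and yields the stronger bound $u\ge p^\ell+1$.
\item For (5), the paper bounds $tp^\ell+1\le p^{\ell+1}\le p^{(k-r)/2}$. The second inequality needs $k-r\ge 2\ell+2$, so it does not literally cover the case $k-r=2\ell+1$ permitted by hypothesis (2).
\end{itemize}

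Your argument for (5) avoids this. You write $u=Dp^\ell+1$, get $D+t\equiv 0\pmod{p^\ell}$, hence $D\ge p^\ell-t\ge t$, so $u\ge tp^\ell+1$ and $u^2\ge p^{k-r}+1$. This uses only $k-r>2\ell$ and $p^\ell>2t$, so it is uniform in all cases. Pushing it one step further even shows that $k-r=2\ell+1$ has no solutions: there $Dt+(D+t)/p^\ell=p$, which is incompatible with $D\ge p^\ell-t$. This is the same Euclidean-type step the paper uses later in Lemma~\ref{general_div3}.

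You are also right that hypothesis (3), and in fact $m>1$, play no role here.
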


\begin{proof}
(1) \quad Note that
$  \gcd(p^\ell t+1,t) = 1$.
Therefore, from
\[
  m(p^\ell t+1) = t(p^{k-r}+1)
\]
we deduce that $t$ divides $m$. Let $u=m/t$.
Then
\begin{equation}\label{eq:uk}
  p^{k-r}+1 = u\,\bigl(p^\ell t+1\bigr)
\end{equation}
and it is clear that $u$ divides $p^{k-r}+1$.
In particular, $u_0$ divides $p^{k-r}+1$.

Next we are going to prove by contradiction that $u\neq 1,2,3,4$.

Suppose $u=1$, then \eqref{eq:uk} yields
$  p^{k-r}+1 = p^\ell t+1$,
so $p^{k-r} = p^\ell t$ and hence $t = p^{k-r-\ell}$. Since $k>r+\ell$,
we have $t\ge p$, which contradicts $t$ dividing $p-1$.
Thus $u\neq1$.

Let us suppose now that $u=2$, then from \eqref{eq:uk} we have
$p^{k-r}+1 = 2(p^\ell t+1)$,
and reducing this modulo $p$ gives $1\equiv 2 \pmod{p}$ which is a contradiction.
 Thus $u\neq2$. 

Suppose $u=3$, then from \eqref{eq:uk} we have
$ p^{k-r} = 3p^\ell t+2$,
and reducing this modulo $p$ gives $2\equiv 0 \pmod{p}$ which is a contradiction.
This proves that $u>3$.

Suppose $u=4$, then from \eqref{eq:uk} we have
$ p^{k-r} = 4p^\ell t+3$,
and reducing this modulo $p$ gives $3\equiv 0 \pmod{p}$ which is a contradiction if $p>3$.
This proves that $u>4$ when $p>3$.
If $u=4$ and $p=3$ the equation \eqref{eq:uk} becomes
$3^{k-r}+1 = 4\,\bigl(3^\ell t+1\bigr)$
or
$3^{k-r} = 4\cdot 3^\ell \cdot t+3$.
This is a contradiction because $k-r>1$ and $\ell >1$,
so mod 9 we get $0\equiv 3 \pmod{9}$.

(2) \quad  Write $u = 2^e u_0$ with $u_0$ odd.
It remains to prove that $u_0$ is a divisor of  $p^k+1$.

Write $p-1 = ts$ with $s\in\mathbb{N}$. From $m=ut$ and the
assumption $m\mid (p^k+1)(p-1)$ we have
\[
  ut  \mid (p^k+1)(p-1) = (p^k+1)ts,
\]
hence
\begin{equation}\label{eq:u-div}
  u \mid (p^k+1)s.
\end{equation}

Let $p_0$ be the odd part of $p^k+1$, and let $s_0$ be the odd part of $s$.
Then \eqref{eq:u-div} implies $u_0 | p_0 s_0$.
It is a known fact that $\gcd(p^k+1,p-1)=2$, which implies $\gcd(p_0,s_0)=1$.

If $p_1$ is the odd part of $p^{k-r}+1$, then 
\eqref{eq:uk} implies that $u_0$ divides $p_1$.
Also, $\gcd(p_1,s_0)=1$, by the fact that $\gcd(p^{k-r}+1,p-1)=2$.

If $\gcd(u_0,s_0)>1$ then this common divisor would divide 
both $s_0$ and $p_1$, a contradiction.
Therefore $\gcd(u_0,s_0)=1$, and $u_0 | p_0 s_0$ implies $u_0$ divides $p_0$.

As $u_0|p_0$ and $u_0|p_1$, we have obtained that $u_0$ is a common divisor of $p^{k-r}+1$ and $p^k+1$.  

(3) \quad  Since $u_0$ divides $p^k+1$ and $p^{k-r}+1$,
$u_0$ divides $p^r(p^{k-r}+1)-(p^k+1)=p^r-1$.

(4) \quad Recall that $t$ divides $p-1$ and $u_0$ divides $p_0$.
Then
\[
\gcd(p^k+1,p-1)=2 
\implies \gcd(p_0,p-1)=1 
\implies \gcd(u_0,t)=1.
\]

(5) \quad Since $t<p$ we get $tp^\ell < p^{\ell +1} \le p^{(k-r)/2}$
using the assumption $\ell < \frac{k-r}{2}$.

From the equation $p^{k-r}+1 = u\,\bigl(p^\ell t+1\bigr)$
we then get $p^{k-r}+1 \le u \cdot p^{(k-r)/2}$.
It follows that $u \ge p^{(k-r)/2}$.
\end{proof}

We are going to use the previous lemma in order to first prove the case when $\gcd(p^{k-r}+1,p^k+1)=2$. We will use later this result in the general case.

\begin{lemma}\label{general_div2}
Let $p$ be an odd prime. Assume
\begin{enumerate}
  \item $k,\ell,r,t,m\in\mathbb{N}$, 
  \item $1<\ell < k$,  $0<r<k$, and $\ell < \frac{k-r}{2}$, 
  \item $\ell\mid k$ and $k/\ell$ is even,
  \item $t\mid (p-1)$,
  \item $m\mid (p^k+1)(p-1)$ and $m>1$,
  \item $\gcd(p^{k-r}+1,p^k+1)=2$.
\end{enumerate}
 Then there are no solutions
to the equation
$m\bigl(p^\ell t+1\bigr) \;=\; t\bigl(p^{k-r}+1\bigr).$
\end{lemma}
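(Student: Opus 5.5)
We argue by contradiction, assuming a solution of $m\bigl(p^\ell t+1\bigr)=t\bigl(p^{k-r}+1\bigr)$ exists under hypotheses (1)--(6). All hypotheses of \Cref{general_div} then hold, so we may use its conclusions freely. We set $u=m/t\in\Z$ and write $u=2^e u_0$ with $u_0$ odd. By \Cref{general_div}(2), $u_0$ is a common odd divisor of $p^{k-r}+1$ and $p^k+1$. Hypothesis (6) says that $\gcd(p^{k-r}+1,p^k+1)=2$, and the only odd divisor of $2$ is $1$. Hence $u_0=1$, so $u=2^e$ is a power of $2$.

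Next we bound the power of $2$ that can occur. From equation \eqref{eq:uk} we know $u\mid p^{k-r}+1$. For odd $p$, the number $p^{j}+1$ is $\equiv 2 \pmod 4$ whenever $j$ is even. When $j$ is odd, $v_2(p^j+1)=v_2(p+1)$, which is at most $v_2(p+1)\le \log_2(p+1)$. So in every case $2^e\le p+1$. More crudely, $u\mid p^{k-r}+1$ together with $u=2^e$ gives $u\le 2^{v_2(p^{k-r}+1)}\le p+1$.

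Now we combine this with the lower bound in \Cref{general_div}(5), which gives $u\ge p^{(k-r)/2}$. Hypothesis (2) gives $k-r>2\ell\ge 4$, so $(k-r)/2>2$ and hence $u> p^2$. This contradicts $u\le p+1<p^2$. Therefore no solution exists.

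The only step needing care is the $2$-adic valuation of $p^{k-r}+1$. We would handle it with the standard fact: $p^j+1\equiv 2\pmod 4$ for $j$ even, and $p^j+1=(p+1)(p^{j-1}-\cdots+1)$ with the second factor odd for $j$ odd. If we prefer to avoid even that, a cruder route also works. Equation \eqref{eq:uk} reduced mod $p$ gives $u\equiv 1 \pmod p$. We would then still need the size bound, so the valuation argument is the cleanest way to finish. No other obstacle is expected.
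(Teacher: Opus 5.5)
Your proof is correct. Up to $u=2^e$ it matches the paper: both use Lemma~\ref{general_div}(2) with hypothesis (6) to get $u_0=1$. After that you finish differently.

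The paper uses that $k$ is even, so $\nu_2(p^k+1)=1$. From $p^{k-r}+1=2^e(p^\ell t+1)$ with $e\ge 2$ it deduces $p\equiv 3\pmod 4$, hence $\nu_2\bigl((p-1)(p^k+1)\bigr)=2$. Then $m=2^e t\mid (p-1)(p^k+1)$ is incompatible with $8\mid m$, and the case $p=3$, $e=2$ is handled by a separate mod $9$ check.

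You instead bound $u$ from above by $2^{\nu_2(p^{k-r}+1)}\le p+1$ and play this against the lower size bound. What each route buys:
\begin{itemize}
\item Your route never actually uses hypothesis (3), and neither does the proof of Lemma~\ref{general_div}. So it shows the lemma holds without the parity condition on $k/\ell$, and it also avoids the $p=3$ case split.
\item In exchange, you lean on the gap $k-r>2\ell$ through the size bound. The paper's argument needs only the small-value exclusions of Lemma~\ref{general_div}(1) together with $k$ even.
\end{itemize}

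One caution: the paper's proof of Lemma~\ref{general_div}(5) asserts $p^{\ell+1}\le p^{(k-r)/2}$, which fails when $k-r=2\ell+1$. You do not need (5) in full; $u>p+1$ suffices. This follows directly, since $tp^\ell+1\le (p-1)p^\ell+1<p^{\ell+1}$ gives $u>p^{k-r-\ell-1}\ge p^{\ell}\ge p^2>p+1$. This bound makes your argument independent of that step.
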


\begin{proof}
Suppose there is a solution.
Let $u=m/t$ and write $u=2^eu_0$ where $u_0$ is odd.

Since $k/\ell$ is even, then $k$ must be even.
Since  $k$ is even, we have 
$p^k\equiv 1 \pmod4$.
This implies that $p^k+1\equiv 2 \pmod4$, so
$p^k+1$ is divisible by 2, and not by 4.

Suppose  that 
$\gcd(p^{k-r}+1,p^k+1)=2$.
Then $u_0=1$ by Lemma \ref{general_div}.
So $u=2^e$ for some $e\ge 2$ (because $u>4$).
Then
$p^{k-r}+1 = 2^e\,\bigl(p^\ell t+1\bigr)$
which implies $p^{k-r}\equiv 3 \pmod4$.
This implies that $p\equiv 3 \pmod4$ and $k-r$ is odd.

Recall that  $m=ut=2^et$ for some $e\ge 2$, so $4|m$.
If $p>3$ then $e>2$ by the previous lemma,
so in fact $8|m$. 
Since $m|(p-1)(p^k+1)$ we get $8|(p-1)(p^k+1)$.
Note that 2 divides $p-1$, and 4 does not, since $p\equiv 3 \pmod4$.
Therefore 4 must divide $p^k+1$, but $p^k+1  \equiv 2 \mod 4$, because $k$ is even. So we  have arrived to a contradiction.

If $p=3$, and $e>2$, then $8|m$ and the same argument holds.
If $e=2$ then we do not necessarily have $8|m$.
The equation $p^{k-r}+1 = 2^e\,\bigl(p^\ell t+1\bigr)$ becomes
$3^{k-r}+1 = 4\,\bigl(3^\ell t+1\bigr)$
or
$3^{k-r} = 4\cdot 3^\ell \cdot t+3$.
This is a contradiction because $k-r>1$ and $\ell >1$,
so mod 9 we get $0\equiv 3 \pmod{9}$.
\end{proof}

In the following lemma we are going to add an upper bound for the difference $k-r$ as a hypothesis. The bound $k-r\leq \ell^2$ appears naturally from the proof.

\begin{lemma}\label{general_div3}
Let $p$ be an odd prime. Assume
\begin{enumerate}
  \item $k,\ell,r,t,m\in\mathbb{N}$, 
  \item $1<\ell < k$,  $0<r<k$,  and
$2\ell < k-r\le \ell^2$.
  \item $\ell\mid k$ and $k/\ell$ is even,
  \item $t\mid (p-1)$,
  \item $m\mid (p^k+1)(p-1)$ and $m>1$.
\end{enumerate}
 Then there are no solutions
to the equation
$m\bigl(p^\ell t+1\bigr) \;=\; t\bigl(p^{k-r}+1\bigr)$.
\end{lemma}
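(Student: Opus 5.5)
We argue by contradiction and assume a solution $(k,\ell,r,t,m)$ exists. Put $n=k-r$, so that $2\ell<n\le \ell^2$. The hypotheses of \Cref{general_div} hold, because $\ell<n/2$. So $u=m/t$ is an integer, $u\ge 5$ and $u\ge p^{n/2}$, and
\[
u\,(p^\ell t+1)=p^n+1 .
\]
Writing $u=2^e u_0$ with $u_0$ odd, $u_0$ divides both $p^n+1$ and $p^k+1$. If $\gcd(p^n+1,p^k+1)=2$, \Cref{general_div2} already gives the contradiction. So we may assume, by \Cref{gcd_p_plus}, that $\gcd(p^n+1,p^k+1)=p^g+1$, where $g=\gcd(k,n)$ and both $k/g$ and $n/g$ are odd.

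\textbf{Step 1 (uses $n\le\ell^2$).} We first pin down the shape of $p^\ell t+1$. Work modulo $M=p^\ell t+1$. Since $\gcd(t,M)=1$, we have $p^\ell\equiv -t^{-1}\pmod M$. Write $n=q\ell+s$ with $0\le s<\ell$. Then $p^n+1\equiv 0$ gives
\[
t^{q}+(-1)^q p^s\equiv 0 \pmod M .
\]
Because $n\le \ell^2$ we have $q\le \ell$, and $t\le p-1$. Hence $|t^q+(-1)^qp^s|\le (p-1)^\ell+p^{\ell-1}<p^\ell+1\le M$. I expect checking this estimate cleanly, in particular the boundary case $q=\ell$, $s=0$, to be the main obstacle; it is exactly where the hypothesis $k-r\le\ell^2$ enters. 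The congruence then becomes an equality $t^q=-(-1)^q p^s$. Since $\gcd(t,p)=1$ this forces $s=0$, $t=1$ and $q$ odd. So $\ell\mid n$, $n/\ell$ is odd, and $u=(p^n+1)/(p^\ell+1)$.

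\textbf{Step 2 (bounding $u$ from above).} Since $k$ is even (because $k/\ell$ is even) and $k/g$ is odd, $g$ is even. Therefore $n$ is even and $p^n+1\equiv 2\pmod 4$, which gives $e\le 1$. As $u_0$ is an odd divisor of $p^g+1$, we get $u_0\le (p^g+1)/2$ and hence $u\le p^g+1$. Since $n/g$ is odd, either $g=n$ or $g\le n/3$. If $g\le n/3$, then $p^{n/2}\le u\le p^{n/3}+1$, which is impossible.

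\textbf{Step 3 (the case $g=n$).} Here $n\mid k$ with $k/n$ odd. Combined with Step 1, $\ell\mid n$ and $n/\ell$ is odd, so $k/\ell=(k/n)(n/\ell)$ is odd. This contradicts hypothesis (3) that $k/\ell$ is even. So every case leads to a contradiction, and no solution exists.
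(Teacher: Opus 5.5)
Your proof is correct, and its core (Step~1) is the paper's argument. The paper reduces modulo $M=tp^\ell+1$ iteratively (``multiply by $t$, replace $tp^\ell$ by $-1$''), whereas you do it in one stroke via $p^\ell\equiv -t^{-1}\pmod M$. Both arrive at $t^q=(-1)^{q+1}p^s$, hence $t=1$, $s=0$, $q$ odd.

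The estimate you flag as the main obstacle is easy: $p^\ell-(p-1)^\ell=\sum_{j=0}^{\ell-1}p^j(p-1)^{\ell-1-j}\ge p^{\ell-1}$ gives $(p-1)^\ell+p^{\ell-1}\le p^\ell<M$. This handles $t=1$ and $t>1$ at once, unlike the paper's two-case chain.

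The routes differ only in the endgame. The paper observes that $r/\ell$ is odd, so $\nu_2(r)<\nu_2(k)$, while $k/g$ and $(k-r)/g$ odd force $r/g$ even, so $\nu_2(r)>\nu_2(k)$. You instead use $p^{n/2}\le u\le p^g+1$ to force $g=n$, and then count parities.

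That is valid. You only use $u\ge p^{n/2}$ after showing $n$ is even, which is just as well: the paper's proof of that bound in Lemma~\ref{general_div} relies on $\ell+1\le (k-r)/2$, which holds only for even $k-r$. But Step~2 is an avoidable detour. The chain $\nu_2(k)=\nu_2(g)=\nu_2(n)=\nu_2(\ell)<\nu_2(k)$ gives the contradiction directly, and your Step~3 is just its special case $g=n$.

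In exchange, your write-up is self-contained. The paper cites $\nu_2(r)>\nu_2(k)$ as previously shown, although it does not appear earlier in the text.
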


\begin{proof}
Suppose there is a solution.
Let $u=m/t$ and let $M=tp^\ell +1$. 
Write the equation as
$p^{k-r}+1=uM$,
which implies \begin{equation}\label{cong1}
p^{k-r}\equiv -1 \pmod{M}.
\end{equation}

We start with the congruence 
\begin{equation}\label{cong2}
tp^\ell \equiv -1 \pmod{M}.
\end{equation}
Multiply this by $p^{k-r-\ell}$ to obtain
\[
tp^{k-r}\equiv -p^{k-r-\ell} \pmod{M}.
\]
Using \eqref{cong1} this becomes
\[
t\equiv p^{k-r-\ell} \pmod{M}.
\]
We begin a Euclidean algorithm type procedure.
Multiply by $t$ to get
\[
t^2\equiv tp^{k-r-\ell} \pmod{M}.
\]
Write $p^{k-r-\ell}$ as $p^\ell \cdot p^{k-r-2\ell}$, to get
\[
t^2\equiv tp^\ell \cdot p^{k-r-2\ell} \pmod{M}.
\]
Using \eqref{cong2} we get
\[
t^2\equiv - p^{k-r-2\ell} \pmod{M}.
\]
This completes the Euclidean algorithm type step.
Repeating this step (multiply by $t$, and replace $tp^\ell$ with $-1$) we get
\[
t^3\equiv  p^{k-r-3\ell} \pmod{M}.
\]
After doing this Euclidean algorithm step $i-1$ times we get
\[
t^i\equiv (-1)^{i-1} p^{k-r-i\ell} \pmod{M}.
\]
The $\pm$ sign actually will not matter, as we will see.

Suppose that $k-r=Q \ell +R$ by division of $k-r$ by $\ell$, where $0\le R < \ell$. 
Then  after $Q-1$ steps  we will have
\[
t^Q\equiv (-1)^{Q-1} p^{k-r-Q\ell} \pmod{M}
\]
or
\[
t^Q\equiv (-1)^{Q-1} p^{R} \pmod{M}.
\]
We claim that this congruence is an equality of integers.
To prove this, 
\begin{align*}
|t^Q- (-1)^{Q-1} p^{R}| &\le t^Q+p^R \quad \text{triangle inequality}\\
&<p^Q+p^R \quad \text{since $t<p$}\\
&\le p^\ell+p^R \quad \text{$Q\le \ell$ since $k-r\le \ell^2$}\\
&<p^\ell+p^\ell \quad \text{since $R<\ell$}\\
&=2p^\ell\\
&\le tp^\ell \quad \text{assuming $t>1$}\\
&<tp^\ell +1=M.
\end{align*}
Note that this inequality also holds when $t=1$, and the proof is easier:
\[
|t^Q- (-1)^{Q-1} p^{R}| =|1\pm p^R|\le 1+p^R<1+p^\ell=M.
\]
Therefore, for any $t$,  $t^Q- (-1)^{Q-1} p^{R}$ is an integer which is divisible by $M$
and whose absolute value is $<M$.
The only such integer is 0. 
So we have an equality of integers
\[
t^Q= (-1)^{Q-1} p^{R}.
\]
Since $p$ does not divide $t$, this is only possible when $t=1$, 
$R=0$ and $Q$ is odd, when both sides are 1. 
In this case, moreover, $\ell$ divides $k-r$ and the quotient 
$(k-r)/\ell$ is odd. 

Recall that  $\ell |k$ and $k/\ell $ is even.
If we also have $\ell$ divides $k-r$ and 
$(k-r)/\ell$ is odd,
together these imply $\ell |r$ and $r/\ell$ is odd.
This implies $\nu_2(r)< \nu_2(k)$.
Under the assumption $\gcd(p^{k-r}+1,p^k+1)=p^g+1>2$,
we showed that $\nu_2(r)>\nu_2(k)$,
so we would have a contradiction.

Observe that we still have one unsolved case, this is when $t=1$, $\gcd(p^{k-r}+1, p^k+1)=2$ and $\ell$ divides $k-r$ and $(k-r)/\ell$ is odd. However, this case is solved by \Cref{general_div2}.
\end{proof}

\section{Results} \label{sec:inf family}

In this last section, 
we assume that $B_k=B_{k,-1}$, so $B_k$ denotes the 
Artin-Schreier curve $y^p-y=x^{p^k+1}$.
We will give two results. In the first one we describe an infinite subfamily of curves $B_k$ for which there is no Galois morphism between them, the ones of the form $k=2^a$. The second result tells us that there is no Galois map between the curves $B_k$ and $B_\ell$ when $k<\ell^2$, except in the obvious case $p^\ell+1\mid p^k+1$, in which a map does exist.

\begin{corollary}\label{result1}
If $k=2^a$, there is no Galois map from a curve of type $B_k$ to one of type $B_{\ell}$. 
In particular, there is no Galois morphism  $B_{2^a}\longrightarrow B_{2^b}$, even though as soon as $b<a$, the L-polynomial of $B_{2^b}$ divides the one of $B_{2^a}$.
\end{corollary}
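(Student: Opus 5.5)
Suppose, for a contradiction, that there is a Galois morphism $B_k\to B_\ell$ with $k=2^a$ and $1<\ell<k$. By \Cref{morphismiff} (or rather its proof, which composes with the isomorphisms of \Cref{iso between Bk + and -}) we may transport to $c=1$. The composite is still Galois, since conjugating by an isomorphism preserves the Galois property. So $B_\ell\cong B_k/G$ for some $G=U\rtimes C\le\Aut(B_k)$ with $|U|=p^r$ and $|C|=m$. By \Cref{first_groups}(3) and the remark in \Cref{Section5}, we may assume $G\cap E$ is trivial, so in particular $r<k$. The genus formula \eqref{eq: genus quotient} then gives relation \eqref{eq: B_kB_l}, namely $m(p^\ell t+1)=t(p^{k-r}+1)$ with $t\mid p-1$ and $m\mid (p^k+1)(p-1)$. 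The goal is to show that this equation has no admissible solution.

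First I use the L-polynomial constraint quoted in the introduction: a morphism forces $\ell\mid k$. Hence $\ell=2^b$ with $1\le b<a$, and $k/\ell=2^{a-b}$ is even. By \Cref{lem:divisibility cond}, $p^\ell+1\nmid p^k+1$, so the case $|G|=(p^k+1)/(p^\ell+1)$ from \Cref{first_groups}(1) cannot occur. I then dispose of the degenerate cases. If $r=0$, then $G=C\le H$, and \Cref{first_groups}(1) together with \Cref{thm: main} excludes it. If $m=1$, then $G\le G^{(1)}_{P_k}$, which \Cref{first_groups}(2) excludes. So $0<r<k$ and $m>1$. Next, \Cref{apdp} applied with $P=p^\ell$ (as explained just before \Cref{general_div}) rules out $\ell\ge (k-r)/2$. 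Therefore $2\ell<k-r$, and all hypotheses of \Cref{general_div} and \Cref{general_div2} hold except possibly the gcd condition.

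The key step is to verify $\gcd(p^{k-r}+1,p^k+1)=2$, after which \Cref{general_div2} finishes the proof. By \Cref{gcd_p_plus}, the gcd exceeds $2$ only if $k/g$ and $(k-r)/g$ are both odd, where $g=\gcd(k,k-r)$. Since $k=2^a$, $g$ is a power of $2$ dividing $k$, and $k/g$ odd forces $g=k$. But then $k\mid k-r$ with $0<k-r<k$, which is impossible. Hence the gcd is $2$, and \Cref{general_div2} shows that \eqref{eq: B_kB_l} has no solution, which is a contradiction.

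For the ``in particular'' statement, note that $2^b\mid 2^a$ for $b<a$, so by the result cited from \cite{MCGUIRE20193341}, $L_{B_{2^b}}$ divides $L_{B_{2^a}}$. The main obstacle I expect is the careful bookkeeping of the reduction steps: justifying $\ell\mid k$ from Kleiman–Serre (the introduction's statement has the roles of $k$ and $\ell$ typed inconsistently), ensuring that the case $\ell=1$ is either excluded by the standing hypothesis $\ell>1$ or handled separately, and confirming that the transfer from $c=-1$ to $c=1$ preserves Galois-ness so that the group-theoretic parametrization of \Cref{Section5} applies.
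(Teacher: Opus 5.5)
Your proposal follows the paper's proof of \Cref{result1}, with the hypotheses of \Cref{general_div2} checked more carefully. The paper likewise transports to $c=1$, disposes of $r=0$ by \Cref{first_groups}(1), observes that $k/\gcd(k,r)$ is even so the gcd in \Cref{gcd_p_plus} is $2$, and applies \Cref{general_div2}. Your explicit treatment of $m=1$, and of $\ell\ge (k-r)/2$ via \Cref{apdp}, spells out steps the paper leaves implicit. The argument is correct once one step is removed.

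\textbf{The step to drop.} This is ``a morphism forces $\ell\mid k$''.
\begin{itemize}
\item The result of \cite{MCGUIRE20193341} compares $L$-polynomials over $\F_p$, and Kleiman--Serre over $\F_p$ only rules out morphisms defined over $\F_p$. Your morphism is only defined over $K$, and the transport to $c=1$ is not $\F_p$-rational.
\item $L$-polynomials over $\F_p$ are not respected by $K$-morphisms at all: \Cref{iso between Bk + and -} exhibits $K$-isomorphic curves $B_{k,1}$ and $B_{k,-1}$ whose $L$-polynomials over $\F_p$ have the same degree but differ, so neither divides the other.
\item You never actually need $\ell\mid k$. In the proofs of \Cref{general_div} and \Cref{general_div2}, hypothesis (3) is used only to conclude that $k$ is even, which is automatic for $k=2^a$. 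The paper's proof also never invokes $\ell\mid k$; evenness of $k$ is all it uses.
\item For $r=0$, $p^\ell+1\nmid p^k+1$ holds for every $1<\ell<2^a$ directly from \Cref{lem:divisibility cond}, because the only divisor of $2^a$ with odd cofactor is $2^a$ itself.
\end{itemize}

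\textbf{A smaller point.} $r<k$ does not follow from $G\cap E$ being trivial; with $|A|=p^{2k+1}$ that only gives $r\le 2k$. It follows instead from the genus relation $t(p^r+p^k)=p^rm(p^\ell t+1)$:
\begin{itemize}
\item for $r>k$, the two sides have different $p$-adic valuations, since $p\nmid tm$;
\item for $r=k$, the relation reads $2t=m(p^\ell t+1)>2t$, which is impossible.
\end{itemize}
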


\begin{proof} 
Assume $k=2^a$ and that there is a Galois morphism 
$B_{k}\longrightarrow B_\ell$, with Galois group $G$ where
$|G|=mp^r$. 
By  \Cref{morphismiff} there is a Galois morphism 
$B_{k,1}\longrightarrow B_{\ell,1}$.
As explained at the start of \cref{sec: Cond quot},
\eqref{eq: B_kB_l} has a solution.
Assuming $r>0$, if $k$ is a power of 2 then $k/\gcd(k,r)$ must be even, 
no matter what $r$ is.
Lemma \ref{general_div2}  gives a contradiction.
If $r=0$ then case 1 of Theorem \ref{first_groups} applies, 
to obtain a contradiction.

When $\ell =2^b$ and $b<a$ it is shown in \cite{MCGUIRE20193341}
that the L-polynomial of $B_\ell$ divides that of $B_k$.
\end{proof}

The next result uses a different Lemma from \cref{sec: Cond quot}.

\begin{corollary}\label{result2}
If $k<\ell^2$, there is no Galois map from a curve of type $B_k$ to one of type $B_{\ell}$, except when $|G|=(p^k+1)/(p^\ell +1)$.
\end{corollary}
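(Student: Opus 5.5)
I will follow the structure of the proof of Corollary \ref{result1}. Suppose $k<\ell^2$ and that there is a Galois morphism $B_k\longrightarrow B_\ell$ with group $G\le \Aut(B_k)$, $|G|=mp^r$, and $G=U\rtimes C$ as in \Cref{Section5}. By \Cref{morphismiff} we may pass to $c=1$. We may assume $G\cap E$ is trivial, since otherwise the quotient is rational.

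\textbf{Reductions.} If $r=0$, then $G\le H$, and part 1 of \Cref{first_groups} forces $|G|=(p^k+1)/(p^\ell+1)$. This is exactly the excluded exceptional case. So we assume $r>0$.

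\textbf{Setting up the number-theoretic constraints.} With $r>0$, the genus computation gives \eqref{eq: B_kB_l}, namely $m(p^\ell t+1)=t(p^{k-r}+1)$, with $t\mid p-1$ and $m\mid (p^k+1)(p-1)$. The case $m=1$ should be excluded separately, since then $G=U\le G^{(1)}_{P_k}$ and part 2 of \Cref{first_groups} applies. Lemma \ref{apdp} with $P=p^\ell$ gives a contradiction unless $\ell<(k-r)/2$, so we may assume $2\ell<k-r$. Also $k-r<k<\ell^2$, which gives the hypothesis $2\ell<k-r\le\ell^2$ of Lemma \ref{general_div3}.

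\textbf{Remaining hypothesis and main obstacle.} We still need $\ell\mid k$ with $k/\ell$ even.
\begin{itemize}
\item If $p^\ell+1\mid p^k+1$, then by \Cref{lem:divisibility cond} we have $\ell\mid k$ with $k/\ell$ odd.
\item Otherwise the Kleiman--Serre/L-polynomial result of \cite{MCGUIRE20193341} gives $\ell\mid k$, and by \Cref{lem:divisibility cond} $k/\ell$ is then even. Lemma \ref{general_div3} applies and yields a contradiction.
\end{itemize}

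The main obstacle is the odd-quotient case, where $\ell\mid k$ with $k/\ell$ odd but $|G|\neq (p^k+1)/(p^\ell+1)$. Here I would rerun the Euclidean step of Lemma \ref{general_div3}. It shows that $t=1$ and that $\ell\mid k-r$ with $(k-r)/\ell$ odd, hence $\ell\mid r$ with $r/\ell$ even. Equation \eqref{eq: B_kB_l} then reads $m=(p^{k-r}+1)/(p^\ell+1)$, so $m$ is coprime to $p$ and divides $p^k+1$.

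Since $p^\ell+1\mid p^{k-r}+1$ and $p^\ell+1\mid p^k+1$, it divides $p^r-1$, consistent with $r/\ell$ even. To close this case I would compare with the Riemann--Hurwitz bound of Lemma \ref{rh_ineq}, or invoke part 2 of \Cref{first_groups} applied to $U$ via the normalizer argument. I am uncertain this last case closes cleanly, and it is where I expect most of the work.
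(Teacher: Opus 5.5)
\textbf{Overall.} Your reductions coincide with the paper's proof: $r=0$ is handled by part~1 of Theorem~\ref{first_groups}, and $r>0$ is split between Lemma~\ref{apdp} and Lemma~\ref{general_div3}. You also correctly notice two things the paper's proof passes over:
\begin{itemize}
\item Lemma~\ref{apdp} by itself only forces $k-r=\ell$ and $t=m=1$; it does not give a contradiction.
\item Lemma~\ref{general_div3} needs $m>1$ and $k/\ell$ even, and the paper never checks these.
\end{itemize}

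\textbf{The gap.} Your proposal is not a proof, because the odd-quotient case is left open, and neither tool you suggest can close it.
\begin{itemize}
\item Lemma~\ref{rh_ineq} gives nothing. Equation \eqref{eq: B_kB_l} \emph{is} Riemann--Hurwitz with the exact different, so every solution satisfies that bound automatically.
\item Part~2 of Theorem~\ref{first_groups} concerns $G\le G^{(1)}_{P_k}$, whereas your $G$ has $m>1$.
\end{itemize}

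\textbf{The case cannot be closed.} In fact the statement is false in this case, and your $m=1$ step fails as well.
\begin{itemize}
\item The proof of part~2 of Theorem~\ref{first_groups} assumes $G^{(1)}_{P_k}$ is abelian. But $\beta_{d_1,e_1}\beta_{d_2,e_2}=\beta_{d_1+d_2,\,e_1+e_2+L_{d_1}(d_2)}$, and $L_{d_1}(d_2)\neq L_{d_2}(d_1)$ in general. So the group is extraspecial, and the normalizer argument only yields $r\ge k-\ell$.
\item Concretely, let $k=3\ell$, $q=p^\ell$ and $\pi(x)=x^{q^2}-cx$ with $c=w^{q^2-1}$. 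Take $w^{q^6-1}=(-1)^{\ell+1}$, so that $\ker\pi=\F_{q^2}w\subseteq\{d: d^{p^{2k}}+(-1)^kd=0\}$. Also require $z=w^{q^3+1}$ to satisfy $z^{q^2}+(-1)^{\ell+1}z^q+z=0$, which is a solvable trace-zero condition.
\item Reducing modulo $\{h^p+h\}$ gives $c^{-1}\pi(x)^{q+1}\equiv -x^{q^3+1}$. Hence $y'=-y+h(x)$ satisfies $y'^p+y'=c^{-1}\pi(x)^{q+1}$.
\item Then $K(\pi(x),y')\cong K(B_\ell)$, and it is the fixed field of the $p^{2\ell}$ automorphisms $\beta_{v,e}$, $v\in\ker\pi$, with $e$ normalized.
\item For $\ell\ge4$ this is a Galois cover $B_{3\ell}\to B_\ell$ with $3\ell<\ell^2$ and $|G|=p^{2\ell}\neq(p^{3\ell}+1)/(p^\ell+1)$.
\item Composing $B_{90}\to B_{30}$ (this construction; $\ker\pi=\F_{p^{60}}w$ is stable under $x\mapsto\lambda x$) with the tame $B_{30}\to B_{10}$ also realizes your obstacle case with $m>1$ and $2\ell<k-r$.
\end{itemize}

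\textbf{Consequence.} No completion of your plan, or of the paper's, proves the statement as written. At best one gets it with the exception widened to $p^\ell+1\mid p^k+1$. Even then, the $m=1$ case needs an argument that Theorem~\ref{first_groups}(2) does not supply.

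\textbf{The McGuire step.} Your appeal to \cite{MCGUIRE20193341} is not valid for morphisms defined only over $K$. These curves are supersingular, so over a large enough $\F_{p^s}$ their L-polynomials are powers of a single linear factor, and Kleiman--Serre gives no obstruction. It is also unnecessary when $m>1$: the Euclidean step, the argument of Lemma~\ref{general_div}(2), Lemma~\ref{gcd_p_plus} and a size comparison already force $(k-r)\mid k$ with odd quotient.
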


\begin{proof} Suppose there is such a Galois map, with Galois group $G$ where
$|G|=mp^r$. 
By  \Cref{morphismiff} there is a Galois morphism 
$B_{k,1}\longrightarrow B_{\ell,1}$.
As explained at the start of \cref{sec: Cond quot},
\eqref{eq: B_kB_l} has a solution.
Assuming $r>0$, 
Lemma \ref{general_div3}  gives a contradiction when $\ell <(k-r)/2$,
and  Lemma \ref{apdp} proves the case $\ell \ge (k-r)/2$.
If $r=0$ then case 1 of Theorem \ref{first_groups} applies, 
to obtain the result.
\end{proof}

Another way of stating Corollary \ref{result2} is that there is no Galois cover 
$B_{k}\longrightarrow B_\ell$ for any $\ell > \sqrt{k}$
(apart from the exception).
For example, there is no Galois cover 
$B_{100}\longrightarrow B_\ell$ for any $\ell > 10$,
apart from when $\ell$ divides 100 and $100/\ell$ is odd (e.g. $\ell=20$).

As a special case of Corollary \ref{result2}, we have the following.

 \begin{corollary}
 There is no Galois cover
$B_{2\ell}\longrightarrow B_\ell$ for any integer $\ell > 2$,
even though the L-polynomial of $B_\ell$ divides that of $B_{2\ell}$.
\end{corollary}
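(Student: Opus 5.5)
This is a special case of Corollary \ref{result2} with $k=2\ell$. We have $k=2\ell<\ell^2$ exactly when $\ell>2$, so the hypothesis of that corollary holds. It remains to rule out its exceptional case and to justify the claim about L-polynomials.

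\emph{Excluding the exception.} Corollary \ref{result2} says any Galois cover $B_{2\ell}\to B_\ell$ would have $|G|=(p^{2\ell}+1)/(p^\ell+1)$. For this to be an integer we need $p^\ell+1\mid p^{2\ell}+1$. By Lemma \ref{lem:divisibility cond}, that would require $k/\ell=2$ to be odd, which is false. The same conclusion follows from Lemma \ref{gcd_p_plus}: $\gcd(p^\ell+1,p^{2\ell}+1)=2<p^\ell+1$. So the exceptional order is not an integer, and no Galois cover exists.

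\emph{What the proof of Corollary \ref{result2} is doing here.} Suppose $|G|=mp^r$. If $r=0$, part 1 of Theorem \ref{first_groups} forces $|G|=(p^{2\ell}+1)/(p^\ell+1)$, which was just excluded. If $r>0$, equation \eqref{eq: B_kB_l} must hold.
\begin{itemize}
\item When $\ell\ge (k-r)/2$, Lemma \ref{apdp} (with $P=p^\ell$ and $t\le p-1$) leaves only a degenerate solution, and that one contradicts $m>1$ or $t<p$.
\item When $\ell<(k-r)/2$, we have $2\ell<k-r<2\ell$, which is impossible. So for $k=2\ell$ this case cannot occur, and we do not even need the fuller strength of Lemma \ref{general_div3}.
\end{itemize}
The one step to check carefully is that the $\ell\ge(k-r)/2$ branch of Lemma \ref{apdp} applies. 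Write $k-r=2\ell-r$. Then \eqref{eq: B_kB_l} becomes $m(p^\ell t+1)=t(p^{2\ell-r}+1)$ with $2\ell-r<2\ell$. Bounding sizes gives $m<t\,p^{\ell-r}+1$, and comparing this with the requirement $t\mid m$ (shown as in Lemma \ref{general_div}) should give the contradiction directly. I expect this bookkeeping to be the only real obstacle.

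\emph{The L-polynomial claim.} It follows from \cite{MCGUIRE20193341} (Theorems 24 and 25), which gives $L_{B_\ell}\mid L_{B_k}$ whenever $\ell\mid k$. Here $\ell\mid 2\ell$.
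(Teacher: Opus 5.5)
Your argument is the paper's: the corollary is Corollary \ref{result2} specialised to $k=2\ell$ (where $2\ell<\ell^2$ exactly when $\ell>2$), the exceptional order $(p^{2\ell}+1)/(p^\ell+1)$ is ruled out because $k/\ell=2$ is even, and the L-polynomial remark comes from \cite{MCGUIRE20193341}. In your optional unpacking, Lemma \ref{apdp} does not by itself give a contradiction: it forces $t=p^{k-r-\ell}$, hence $t=u=m=1$ and $r=k-\ell=\ell$, which is a genuine solution of \eqref{eq: B_kB_l} (a $p$-group $G$ of order $p^\ell$); that case is excluded by part 2 of Theorem \ref{first_groups}, not by comparing sizes with $t\mid m$.
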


Furthermore, given $\ell>2$, there is no Galois cover
$B_{2\ell}\longrightarrow B_n$ for any integer $n\ge \ell $.

\section*{Acknowledgments}

This publication has emanated from research supported in part by a grant from Taighde Éireann – Research Ireland under Grant number 18/CRT/6049. For the purpose of Open Access, the author has applied a CC BY public copyright licence to any Author Accepted Manuscript version arising from this submission.

\bibliography{biblio}
	\bibliographystyle{alpha}

\end{document}